\newtheorem{theorem}{Theorem}[section]
\newtheorem{proposition}[theorem]{Proposition}
\newtheorem{lemma}[theorem]{Lemma}
\newtheorem{corollary}[theorem]{Corollary}
\theoremstyle{remark}
\newtheorem{remark}[theorem]{Remark}
\theoremstyle{definition}
\newtheorem*{acknowledgement}{Acknowledgement}
\numberwithin{equation}{section}
\numberwithin{figure}{section}
\newcommand{\E}{\mathds{E}}
\renewcommand{\P}{\mathds{P}}
\newcommand{\R}{\mathbb{R}}
\newcommand{\N}{\mathbb{N}}
\newcommand{\dd}{{\rm d}}
\newcommand{\fstop}{\; \text{.}}
\newcommand{\comma}{\; \text{,}\;\;}
\newcommand{\tonde}[1]{\left(#1\right)}
\newcommand{\quadre}[1]{\left[#1\right]}
\newcommand{\abs}[1]{\left\lvert#1\right\rvert}
\newcommand{\eqdef}{\coloneqq}
\newcommand{\car}{\mathds{1}}
\newcommand{\scalar}[2]{\left\langle #1, #2 \right\rangle}
\newcommand{\var}{{\rm Var}}
\newcommand{\cW}{\ensuremath{\mathcal W}}
\newcommand{\set}[1]{\left\{#1\right\}}							%Insieme, graffe
\let\a=\alpha \let\b=\beta   \let\d=\delta  
 \let\g=\gamma       \let\l=\lambda
\let\r=\rho      
\let\D=\Delta
\subjclass[2020]{Primary 60K35; Secondary 82B20; 82C26}
\keywords{Mixing times of Markov chains; cutoff phenomenon; exchange models} 
\author{Pietro Caputo, Matteo Quattropani, Federico Sau}
\address{Pietro Caputo\\ Universit\`{a} degli Studi Roma Tre}
\email{pietro.caputo@uniroma3.it}
\address{Matteo Quattropani\\ Universit\`{a} degli Studi Roma Tre}
\email{matteo.quattropani@uniroma3.it}
\address{Federico Sau\\ Universit\`{a} degli Studi di Milano}
\email{federico.sau@unimi.it}
\begin{document}
		  	\title[A universal cutoff phenomenon for 
		  	mean-field
		  	exchange models]{A universal cutoff phenomenon\\ for 
		  	mean-field
		  	exchange models}
	
\begin{abstract} 
We study a broad class of high-dimensional mean-field exchange models, encompassing both noisy and singular dynamics, along with their dual processes. This includes a generalized version of the averaging process as well as some non-reversible extensions of classical exchange dynamics, such as the flat Kac model. Within a unified framework, we analyze convergence to stationarity from worst-case initial data in Wasserstein distance. Our main result establishes a universal cutoff phenomenon at an explicit mixing time, with a precise window and limiting Gaussian profile. The mixing time and profile are  characterized in terms of the logarithm of the size-biased redistribution random variable, thus admitting a natural entropic interpretation. 
\end{abstract}
	
	\maketitle
	
%	\setcounter{tocdepth}{4}
%	\tableofcontents
	\thispagestyle{empty}

		\section{Introduction}
		Stochastic exchange models play a fundamental role across diverse fields, from statistical physics to the social sciences. In statistical physics, they are typically interpreted as models of mass redistribution or energy exchange, while in social sciences they arise in contexts such as wealth redistribution, opinion dynamics, and information spreading; see, e.g., \cite{kipnis_heat_1982,castellano_statistical_2009,lanchier_stochastic_2024} and references therein. 
		
In kinetic theory, following the foundational program of Mark Kac \cite{kac_foundations_1956}, mean-field stochastic exchange models have been widely used as particle approximations to nonlinear equations, particularly through the framework of propagation of chaos. Over the past two decades, significant progress has been made in analyzing convergence to stationarity for such models using spectral and entropy-based methods (see, e.g., \cite{carlen_carvalho_loss_determination_2003, villani_cercignani_2003, carlen_entropy_2010, mischler_mouhot_kac_2013}). These results, however, rely on the assumption that the initial condition is sufficiently regular, typically absolutely continuous with respect to the equilibrium measure. In contrast, the analysis of convergence from singular initial data, such as Dirac-type configurations that often represent worst-case scenarios, remains far less developed.

In this paper, we consider three closely related families of mean-field exchange models and investigate their convergence to stationarity when initialized from a Dirac mass. Our main result establishes a universal cutoff phenomenon in a natural Wasserstein distance, under remarkably mild assumptions.

The study of cutoff phenomena---sharp transitions in the convergence to equilibrium---has emerged as a central topic in the modern theory of stochastic processes. While substantial breakthroughs have been achieved in the setting of finite-state Markov chains and in certain classes of diffusion processes \cite{lubetzky2010cutoff,lacoin2016mixing,bordenave_cutoff2019,salez_cutoff_2024, salez2025cutoff}, a comprehensive theory remains elusive. In particular, the case of jump processes with continuous state spaces---such as the exchange models considered here---has received comparatively little attention.

In this direction, some partial results are known for the flat Kac model, where the correct order of the worst-case total variation mixing time was determined in \cite{smith_gibbs_2014}. Additional results in one-dimensional settings appear in \cite{randall_winkler_mixing_2005, caputo_mixing_2019,labbe_petit_hydrodynamic_2025}. Despite further refinements in \cite{pillai_smith_kac_2017, pillai_smith_mixing_2018}, however, existing techniques do not yield cutoff results for the mean-field setup, nor do they extend to the broader classes of dynamics addressed in the present work. It is also worth emphasizing that in continuous settings, total variation distance can be overly sensitive to microscopic irregularities---especially when equilibrium measures are singular---thereby motivating the use of weaker metrics. In this context, Wasserstein distances offer a natural and robust alternative \cite{aldous_lecture_2012, banerjee2020rates, quattropani2021mixing, oliveira_convergence_2009}.

\subsection{Stochastic exchange models}
		
The models to be considered are discrete-time Markov chains described as follows. 
For a fixed $n\in\N$, an energy configuration of $n$ labeled particles is given by a vector $\eta=(\eta(1),\ldots, \eta(n))\in \R_+^n$, where the (nonnegative) scalar $\eta(x)$, $x=1,\dots,n$, is interpreted as the energy of the $x$-th particle. The state space is the set of all possible such configurations. At each time step the current configuration $\eta$ is updated by  
choosing an ordered pair $(x,y)$ of distinct particles selected uniformly at random, and an energy exchange between the two particles is performed.  We consider three different types of exchange mechanisms:
\begin{itemize}
	\item \emph{Stochastic Redistribution Model {(SRM)}}: \[(\eta(x),\eta(y))\mapsto (X(\eta(x)+\eta(y)), (1-X)(\eta(x)+\eta(y)))\fstop\]
	\item \emph{Stochastic Equalization  Model {(SEM)}}: \[(\eta(x),\eta(y))\mapsto (X\eta(x)+(1-X)\eta(y), X\eta(x)+(1-X)\eta(y))\fstop\]
	\item \emph{Generalized Averaging Model {(GAM)}}: \[(\eta(x),\eta(y))\mapsto (X\eta(x)+(1-X)\eta(y), (1-X)\eta(x)+X\eta(y))\fstop\]
	\end{itemize}
In all three cases, $X$ is a random variable taking values in $[0,1]$, representing a fraction of the energy to be redistributed. The only assumptions on the variable $X$ are:
\begin{itemize}
	\item \emph{Symmetry:} $X$ and $1-X$ have the same distribution;
	\item \emph{Non-degeneracy:} $X$ is not a Bernoulli random variable, i.e., $	\P(X\not\in\{0,1\})>0$.
\end{itemize}
Thus, if $\eta_0\in \R_+^n$ denotes the initial configuration, the configuration $\eta_t$ at time $t\in\N$ is given by 
\begin{equation}\label{eq:repre}
	\eta_t = \eta_0 R_1\cdots R_t\comma\qquad t \ge 1,
\end{equation}
  where $R_i$ are i.i.d.\ $n\times n$ random matrices obtained as follows. Letting $(x,y)$ denote the random pair, and letting $X$ denote the redistribution random variable chosen at time $t$, $R_t$ is the block diagonal matrix given by the identity except for the $2\times 2$ block identified by the pair $(x,y)$, where it takes, in SRM, SEM and GAM respectively, the form
\begin{equation}\label{eq:repre1}
r=\begin{pmatrix} X & 1-X\\
		X & 1-X
	\end{pmatrix}\,,\qquad r^{\top}=\begin{pmatrix} X & X\\
		1-X & 1-X
	\end{pmatrix}\,,\qquad q=\begin{pmatrix} X & 1-X\\
		1-X & X
	\end{pmatrix}\,.
\end{equation}
We note that, while SRM and GAM both conserve the total energy, SEM does not. However, it satisfies the maximum principle 
\begin{equation}\label{eq:max}
\max_{x}\eta_t(x)\le \max_{x}\eta_{0}(x)\,,
\end{equation} for all $t\ge 1$, and thus, given an initial configuration $\eta_0\in \R_+^n$, all three models can be thought of as evolving in a compact subset of $\R_+^n$. Without loss of generality, we assume that the initial configuration $\eta_0$ is in $\D$, where $\D$ denotes the $n$-simplex, that is, the set of $\eta\in\R_+^n$ such that $\textstyle{\sum_{x}}\,\eta(x)=1$.
Clearly, the three models coincide if $X\equiv 1/2$ is a point mass. In this special case, they all reduce to the mean-field averaging model recently studied in \cite{chatterjee2020phase}. We shall see that, given an initial energy vector $\eta_0\in \D$,   all three models will reach a limiting stationary state as $t\to\infty$. 
\begin{proposition}\label{prop:uniqueness}
For  any $\eta_0\in \D$, in all three models,  $\eta_t$ converges weakly to a random variable $\eta_\infty$ as $t\to\infty$. The GAM has a degenerate limit $\eta_\infty(x)\equiv\frac1n$, while the SEM has a random degenerate limit $\eta_\infty(x)\equiv Y$, where $Y\in[0,1]$ is a random variable whose distribution depends on the initial state $\eta_0$.  The SRM has a unique (non-degenerate, unless $X\equiv 1/2$) limit.
\end{proposition}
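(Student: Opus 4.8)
\section*{Proof proposal}

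The plan is to treat the three models separately, each via an appropriate Lyapunov functional. For the GAM the key point is that the block $q$ is doubly stochastic, so $\eta_t$ stays in $\D$, and a short computation shows that updating the pair $(x,y)$ with fraction $X$ decreases $\Phi(\eta)\eqdef\sum_z\eta(z)^2$ by exactly $2X(1-X)(\eta(x)-\eta(y))^2\ge 0$. Thus $\Phi(\eta_t)$ is non-increasing along every trajectory and bounded below by $1/n$, hence converges a.s.\ to some $\Phi_\infty\ge 1/n$. Using the identity $\sum_{z\ne w}(\eta(z)-\eta(w))^2=2n\Phi(\eta)-2$ valid on $\D$, together with the telescoping bound $\sum_t\E[\Phi(\eta_t)-\Phi(\eta_{t+1})]=\Phi(\eta_0)-\E[\Phi_\infty]<\infty$ and $\E[X(1-X)]>0$ (non-degeneracy), one obtains $\sum_t\E[\,2n\Phi(\eta_t)-2\,]<\infty$, which forces $\E[\Phi_\infty]=1/n$ and hence $\Phi_\infty=1/n$ a.s. Therefore $\sum_{z\ne w}(\eta_t(z)-\eta_t(w))^2\to 0$ a.s., and since $\sum_z\eta_t(z)=1$ this gives $\eta_t(z)\to 1/n$ a.s.\ for every $z$.

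For the SEM the maximum principle \eqref{eq:max} confines $\eta_t$ to a fixed compact box, and two monotone functionals drive the proof. First, $S_t\eqdef\sum_z\eta_t(z)$ is a bounded martingale: an update at $(x,y)$ changes $S$ by $(2X-1)(\eta_t(x)-\eta_t(y))$, which is centered by the symmetry of $X$; hence $S_t\to S_\infty$ a.s.\ Second, averaging over $X$ (again using symmetry), the conditional expected decrement of $\Phi(\eta_t)=\sum_z\eta_t(z)^2$ at the update $(x,y)$ equals $2\,\E[X(1-X)]\,(\eta_t(x)-\eta_t(y))^2$, so $\Phi(\eta_t)$ is a bounded supermartingale; since $\E[X(1-X)]>0$, summing the conditional decrements yields $\sum_t\E\big[\sum_{z\ne w}(\eta_t(z)-\eta_t(w))^2\big]<\infty$, whence $\max_z\eta_t(z)-\min_z\eta_t(z)\to 0$ a.s.\ Combining this with $\tfrac1n S_t\to\tfrac1n S_\infty$ gives $\eta_t(z)\to Y\eqdef S_\infty/n$ a.s.\ for every $z$; the limit $Y$ lies in $[0,1]$ by the maximum principle, only $\E[Y]=1/n$ is universal, and the law of $Y$ genuinely depends on $\eta_0$.

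For the SRM, total energy is conserved, so $\eta_t\in\D$, and we use a synchronous coupling of two copies $(\eta_t)$, $(\eta_t')$ from arbitrary $\eta_0,\eta_0'\in\D$, driven by the same pair and the same $X$ at each step. Setting $\delta_t\eqdef\eta_t-\eta_t'$ (so $\sum_z\delta_t(z)=0$), an update at $(x,y)$ replaces $(\delta_t(x),\delta_t(y))$ by $\big(X(\delta_t(x)+\delta_t(y)),(1-X)(\delta_t(x)+\delta_t(y))\big)$, so $\|\delta_{t+1}\|_1=\|\delta_t\|_1-2\min(|\delta_t(x)|,|\delta_t(y)|)\,\car\{\delta_t(x)\delta_t(y)<0\}$; in particular $\|\delta_t\|_1$ never increases. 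If $\delta_t\ne 0$, choosing $x$ where $\delta_t$ is largest and $y$ where it is smallest yields opposite signs with $\min(|\delta_t(x)|,|\delta_t(y)|)\ge\|\delta_t\|_1/(2n)$ (because $\sum_z\delta_t(z)=0$); selecting this pair, which has probability $\tfrac2{n(n-1)}$, gives $\|\delta_{t+1}\|_1\le(1-\tfrac1n)\|\delta_t\|_1$. Hence
\[\E\big[\|\delta_{t+1}\|_1\mid\cF_t\big]\le\rho\,\|\delta_t\|_1,\qquad \rho\eqdef 1-\frac{2}{n^2(n-1)}<1,\]
so the $\cW_1$-distance (for the $\ell^1$ metric on $\D$) between the laws of $\eta_t$ from any two initial states is at most $2\rho^t$. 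Since $(\cP(\D),\cW_1)$ is complete and the transition kernel acts as a contraction on it, these laws form a Cauchy sequence converging to a limit $\pi$ independent of $\eta_0$, necessarily the unique stationary law; thus $\eta_t\Rightarrow\eta_\infty\sim\pi$. Finally, a point-mass stationary law would have to be absorbing, and a direct check shows this forces $\eta_\infty=\delta_{\frac1n\mathbf1}$ and $X\equiv 1/2$; hence $\eta_\infty$ is non-degenerate whenever $X\not\equiv 1/2$.

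I expect the SRM case to be the most delicate. There the sign analysis behind the $\ell^1$-contraction of the synchronous coupling must be carried out carefully, and---unlike the GAM and SEM---the chain $\eta_t$ itself does not converge almost surely (it settles into a nontrivial stationary regime), so the conclusion is convergence in distribution, deduced from the contraction estimate by a Cauchy argument in $(\cP(\D),\cW_1)$. The GAM and SEM cases, by contrast, reduce to standard almost-sure convergence of the (super)martingales $\sum_z\eta_t(z)^2$ and, for the SEM, also $\sum_z\eta_t(z)$, once these functionals have been identified.
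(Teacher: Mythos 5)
Your proof is correct, but it takes a genuinely different route from the paper's. The paper proves the one-step quadratic identities $\E\quadre{\|vR_1\|_2^2}$ and $\E\quadre{\|vQ_1\|_2^2}$ (Lemmas~\ref{lem:contra1} and~\ref{lem:contra2}) together with an SEM analogue, which yield exact exponential $L^2$-contraction with explicit rates $\lambda_{\rm SRM},\lambda_{\rm GAM},\lambda_{\rm SEM}$; these rates are then reused as quantitative inputs in Lemmas~\ref{lemma:AL}--\ref{lemma:AL_HMP} and throughout the cutoff proof. You instead run a pathwise $\ell^1$ synchronous coupling for SRM, and deterministic or supermartingale Lyapunov arguments based on $\Phi(\eta)=\sum_z\eta(z)^2$ for GAM and SEM. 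Your computations check out: the GAM update decreases $\Phi$ by exactly $2X(1-X)(\eta(x)-\eta(y))^2$; for SEM, $S_t$ is a bounded martingale and $\Phi$ a bounded supermartingale with conditional decrement $\tfrac{2\E[X(1-X)]}{n(n-1)}\sum_{x\ne y}(\eta_t(x)-\eta_t(y))^2$; and the $\ell^1$-contraction for SRM via the maximizing/minimizing pair is valid (using $\sum_z\delta(z)=0$ so $\max\delta\ge\|\delta\|_1/(2(n-1))$ and similarly for $\min$), giving $\rho=1-2/(n^2(n-1))$. Your argument buys almost-sure convergence for GAM and SEM (stronger than needed), and the pathwise $\ell^1$ coupling for SRM is pleasantly elementary and entirely sign-based, but it produces a spectral rate of order $n^{-3}$, far weaker than the true $L^2$ rate $\lambda_{\rm SRM}\sim 2(1-2\E[X^2])/n$ that the paper establishes and needs later. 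Two minor remarks: for the SEM you assert without proof that the law of $Y$ depends on $\eta_0$ (a one-line check---start from the flat configuration versus a Dirac mass---would suffice; the paper derives it from the duality \eqref{eq:dual}); and your ``direct check'' that a Dirac stationary law for SRM forces $X\equiv 1/2$ is only sketched, though it is easy to complete by noting that absorption forces $X=\eta(x)/(\eta(x)+\eta(y))$ a.s.\ for every pair, so $X$ is deterministic and the symmetry constraint pins it at $1/2$.
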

The stationary state of the SRM and SEM is known explicitly only in the case where $X$ is the ${\rm Beta}(\alpha,\alpha)$ random variable. 

\begin{lemma}\label{lem:gamma}
Fix $\a>0$ and assume $X\sim{\rm Beta}(\alpha,\alpha)$. Then the SRM is a reversible Markov chain and its unique stationary state is the Dirichlet distribution obtained as the product of i.i.d.\  ${\rm Gamma}(\alpha,1)$ random variables conditioned to have total sum equal to $1$. Moreover, taking $\eta_0=\d_{x_0}$, the Dirac mass at  $x_0$, the SEM has $\eta_\infty(x)\equiv Y$, where $Y\sim {\rm Beta}(\alpha,\alpha(n-1))$.	
\end{lemma}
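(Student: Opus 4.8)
The plan is to handle the three assertions separately, deriving the SEM statement from the SRM one via a transposition duality between the two chains. For the SRM, the crucial input is the neutrality (aggregation) property of the Dirichlet distribution: if $\eta\sim\mathrm{Dir}(\alpha,\dots,\alpha)$ then for every pair $\{x,y\}$ the ratio $u\eqdef\eta(x)/(\eta(x)+\eta(y))$ is $\mathrm{Beta}(\alpha,\alpha)$-distributed and independent of the family $\rho\eqdef(\eta(z))_{z\neq x,y}$ (equivalently of the sum $S\eqdef\eta(x)+\eta(y)$). In the coordinates $(\rho,u)$, writing $\mu$ for $\mathrm{Dir}(\alpha,\dots,\alpha)$, this says $\mu=\nu(\dd\rho)\otimes\mathrm{Beta}(\alpha,\alpha)(\dd u)$ for a suitable law $\nu$. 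A single SRM step, conditioned on the selected pair being $\{x,y\}$, leaves $\rho$ (hence $S$) unchanged and replaces $u$ by a fresh independent $\mathrm{Beta}(\alpha,\alpha)$ sample; here the symmetry $X\overset{d}{=}1-X$ makes the ordering of the pair irrelevant. Thus the pair-$\{x,y\}$ transition run from $\mu$ produces the kernel $\nu(\dd\rho)\,\mathrm{Beta}(\alpha,\alpha)(\dd u)\,\delta_\rho(\dd\rho')\,\mathrm{Beta}(\alpha,\alpha)(\dd u')$, which is symmetric under $(\rho,u)\leftrightarrow(\rho',u')$: detailed balance holds for each pair-$\{x,y\}$ kernel, and averaging over the uniformly chosen pair with equal weights preserves it. Hence the SRM is reversible with respect to $\mu$, and $\mu$ is stationary.

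Uniqueness of the stationary law is then immediate from Proposition~\ref{prop:uniqueness}: the SRM admits a single limiting law, the same for all initial $\eta_0\in\D$, so every stationary distribution equals it and therefore equals $\mu$; and $\mu=\mathrm{Dir}(\alpha,\dots,\alpha)$ is precisely the law of a vector of i.i.d.\ $\mathrm{Gamma}(\alpha,1)$ variables conditioned to sum to $1$, by the standard gamma construction of the Dirichlet. For the SEM, observe that the active $2\times 2$ blocks of the SEM and SRM update matrices in \eqref{eq:repre1} are transposes of one another ($r^\top$ versus $r$). Writing $\eta_t^{\mathrm{SEM}}=\eta_0 R_1^\top\cdots R_t^\top$ with $R_1,\dots,R_t$ i.i.d.\ SRM matrices, we get, for any $f\in\R^n$, $\skp{\eta_t^{\mathrm{SEM}}}{f}=\skp{\eta_0}{R_1^\top\cdots R_t^\top f}$, where $g\mapsto R_i^\top g$ is exactly one SRM redistribution step applied to the column vector $g$, since on the active block it reads $(g(x),g(y))\mapsto\big(X(g(x)+g(y)),(1-X)(g(x)+g(y))\big)$. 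Reading the product right to left, $R_1^\top\cdots R_t^\top f$ is the outcome of running this SRM dynamics on $f$ with driving matrices $R_t,\dots,R_1$; as these are i.i.d., $R_1^\top\cdots R_t^\top f\overset{d}{=}\zeta_t$, where $(\zeta_s)_{s\ge 0}$ is an SRM chain with $\zeta_0=f$. This yields the duality $\skp{\eta_t^{\mathrm{SEM}}}{f}\overset{d}{=}\skp{\eta_0}{\zeta_t}$.

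To finish, take $\eta_0=\delta_{x_0}$ and $f=\delta_{x_1}$ with $x_1$ arbitrary (note $\delta_{x_1}\in\D$), so that $\skp{\eta_t^{\mathrm{SEM}}}{f}=\eta_t^{\mathrm{SEM}}(x_1)$ and $\skp{\eta_0}{\zeta_t}=\zeta_t(x_0)$. Letting $t\to\infty$ and invoking Proposition~\ref{prop:uniqueness}, the left-hand side converges weakly to $\eta_\infty^{\mathrm{SEM}}(x_1)=Y$, while $\zeta_t$ converges weakly to $\eta_\infty^{\mathrm{SRM}}\sim\mathrm{Dir}(\alpha,\dots,\alpha)$ by the first part of the lemma; since a single coordinate of $\mathrm{Dir}(\alpha,\dots,\alpha)$ is $\mathrm{Beta}(\alpha,(n-1)\alpha)$, we conclude $Y\sim\mathrm{Beta}(\alpha,\alpha(n-1))$. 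I expect the delicate point to be the duality in the second paragraph: correctly matching the transposed blocks, justifying the time-reversal of the driving sequence, and checking that the limit $t\to\infty$ may be taken on both sides thanks to the weak convergence guaranteed by Proposition~\ref{prop:uniqueness}; the neutrality and marginal properties of the Dirichlet law used elsewhere are classical.
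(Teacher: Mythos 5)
Your proof is correct and takes essentially the same route as the paper's: the neutrality argument you spell out is precisely the paper's observation that the SRM update at a chosen pair is a heat-bath (Gibbs) step for $\mathrm{Dir}(\alpha)$, uniqueness is delegated to Proposition~\ref{prop:uniqueness}, and the SEM marginal is obtained from the duality \eqref{eq:dual}, which you simply re-derive from the transposed block structure. The only cosmetic difference is that the paper invokes \eqref{eq:dual} with SRM initial condition $\bar u=(1/n,\dots,1/n)$ to identify the limit of $\langle\xi_t\rangle$, whereas you take the SRM initial condition $\delta_{x_1}$ and pass to the limit of $\xi_t(x_1)$; both reduce the SEM claim to the one-coordinate marginal of $\mathrm{Dir}(\alpha)$ being $\mathrm{Beta}(\alpha,\alpha(n-1))$.
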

 In the special case where $X$ is the uniform distribution (i.e., $X\sim {\rm Beta}(\alpha,\alpha)$ with $\alpha=1$), the SRM is the mean-field version of the Kipnis-Marchioro-Presutti model \cite{kipnis_heat_1982} introduced for the study of heat flow in a chain of oscillators; the SRM in this context is also known as \textquotedblleft flat Kac model\textquotedblright\ \cite{caputo_kac2008} or \textquotedblleft Gibbs sampler on the simplex\textquotedblright\ \cite{smith_gibbs_2014}. The SEM admits a natural interpretation as a dual process of the SRM. In the special case of Beta distributed $X$, the SEM has appeared in the literature under the name of ``hidden parameter (or temperature) model'' \cite{de_masi_ferrari_gabrielli_hidden_2023,giardina_redig_tol_intertwining_2024}. Generalizations were recently analyzed in \cite{kim2025spectral}.

\subsection{Main results}
Because of the degeneracy in the limiting random variable $\eta_\infty$ for the SEM and GAM, the total variation distance between $\eta_t$ and $\eta_\infty$ does not converge to zero for these models. Therefore, we consider the weaker $1$-Wasserstein distance. 
 For all integers 	$t\ge 0$, we write
	\begin{align}\label{def:W1-dritto}
	W_1(\eta_0,t)&:=	W_1({\rm Law}(\eta_t),{\rm Law}(\eta_\infty))= \inf_{(\xi,\vartheta)\sim(\eta_t,\eta_{\infty})}\E\big[\|\xi-\vartheta\|_1\big]\,,
\end{align}
where $\|v\|_1=\sum_x|v(x)|$ is the $\ell_1$-norm on $\R^n$, the infimum is over all couplings $(\xi,\vartheta)$ of the random variables $\eta_t$ and $\eta_\infty$, and the expectation is taken with respect to such couplings. 
Because of the mean-field character of our models, it is also natural 
to consider a metrics which is not sensitive to  permutations of the labels of the particles. Therefore, we introduce the distance
\begin{align}\label{def:W-storto}
	\cW_1(\eta_0,t)&:= \inf_{(\xi,\vartheta)\sim(\eta_t,\eta_{\infty})}\E\Big[\inf_{\sigma \in \mathfrak S_n} \|\xi-\vartheta\circ\sigma\|_1\Big]\,,
\end{align}
 where $\mathfrak S_n$ stands for the set of permutations on $n$ elements and $\vartheta\circ\sigma$ is the energy configuration after permuting the particles' labels according to $\sigma$. Equivalently, one can also write $ \cW_1(\eta_0,t)$ as the $1$-Wasserstein distance between the empirical distributions on $\R$ of $\eta_t$ and $\eta_{\infty}$.  
Clearly, 
\begin{equation}\label{eq:comparison2}
	\cW_1(\eta_0,t)\le W_1(\eta_{0},t)\comma
	\qquad t \ge 0\,.
\end{equation}
Actually, for SEM and GAM, since $\eta_\infty(x)$ is independent of $x$, the inequality in \eqref{eq:comparison2} saturates to an equality.
We mention a simple monotonicity property of these distances.

\begin{lemma}\label{lemma:monotone}
For any  $\eta_0\in\D$, in all three models 
the distances in \eqref{def:W1-dritto} and \eqref{def:W-storto} are non-increasing in $t\ge 0$. 
\end{lemma}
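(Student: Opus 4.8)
The plan is to show that each dynamics contracts both distances over a single step, i.e.\ that one application of a (suitably relabelled) random matrix $R_t$ does not increase $W_1$ or $\cW_1$; the claimed monotonicity then follows by iteration. First I would record that the limit is invariant under one step: if $\vartheta\sim\eta_\infty$ and $R$ is an independent copy of $R_1$ from \eqref{eq:repre1}, then $\vartheta R\sim\eta_\infty$. This follows from Proposition~\ref{prop:uniqueness}: since $\eta_{t+1}=\eta_tR_{t+1}$ with $R_{t+1}$ independent of $\eta_t$ and of fixed law, the pairs $(\eta_t,R_{t+1})$ converge weakly to $(\vartheta,R)$ with $\vartheta\sim\eta_\infty$ and $R\sim R_1$ independent; as the state space is compact (the total energy is conserved for SRM and GAM, while \eqref{eq:max} confines SEM to $[0,1]^n$) and $(\eta,R)\mapsto\eta R$ is continuous, we get $\eta_{t+1}=\eta_tR_{t+1}\Rightarrow\vartheta R$, and comparing with $\eta_{t+1}\Rightarrow\eta_\infty$ gives the claim.

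The analytic core is the $\ell_1$-contractivity of the $2\times2$ exchange blocks. For $u\in\R^n$ and any of the three blocks in \eqref{eq:repre1} acting on an ordered pair $(x,y)$, the vectors $u$ and $uR$ differ only in coordinates $x,y$, and a one-line estimate based on $|s+s'|\le|s|+|s'|$ shows $\norm{uR}_1\le\norm{u}_1$ for SRM and GAM and every value of $X$. For SEM both new coordinates equal $u(x)X+u(y)(1-X)$; using the symmetry $\E[X]=\tfrac12$ and splitting into the cases $u(x)u(y)\ge0$ and $u(x)u(y)<0$ gives $\E\big[2\,|u(x)X+u(y)(1-X)|\big]\le|u(x)|+|u(y)|$, hence $\E_X[\norm{uR}_1]\le\norm{u}_1$; averaging over the uniform pair, $\E[\norm{uR_1}_1]\le\norm{u}_1$ in all three models.

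For $W_1$ this gives the result immediately: fix $\eps>0$, $t\ge0$, take a coupling $(\xi,\vartheta)$ of $(\eta_t,\eta_\infty)$ with $\E[\norm{\xi-\vartheta}_1]\le W_1(\eta_0,t)+\eps$ and an independent copy $R$ of $R_1$; then $(\xi R,\vartheta R)$ couples $(\eta_{t+1},\eta_\infty)$ by invariance, so
\[
W_1(\eta_0,t+1)\le\E\big[\norm{(\xi-\vartheta)R}_1\big]\le\E\big[\norm{\xi-\vartheta}_1\big]\le W_1(\eta_0,t)+\eps ,
\]
and letting $\eps\downarrow0$ concludes. For $\cW_1$ the same idea works once the near-optimal relabelling is carried through the step. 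Take a measurable minimiser $\tau=\tau(\xi,\vartheta)\in\mathfrak S_n$ of $\sigma\mapsto\norm{\xi-\vartheta\circ\sigma}_1$ (one exists, $\mathfrak S_n$ being finite), with $\E[\norm{\xi-\vartheta\circ\tau}_1]\le\cW_1(\eta_0,t)+\eps$; sample a uniform ordered pair $(x,y)$ and an independent $X$, independently of $(\xi,\vartheta)$, let $R$ be the block on $(x,y)$ and let $R'$ be the same block relabelled by $x\mapsto\tau(x)$, $y\mapsto\tau(y)$ (with the same $X$). Since $(x,y)$ is uniform and independent of $(\xi,\vartheta)$ while $\tau$ is $\sigma(\xi,\vartheta)$-measurable, $(\tau(x),\tau(y))$ is again uniform and independent of $(\xi,\vartheta)$, so $R'$ has the law of $R_1$ and $\vartheta R'\sim\eta_\infty$ by invariance, while $\xi R\sim\eta_{t+1}$; hence $(\xi R,\vartheta R')$ couples $(\eta_{t+1},\eta_\infty)$. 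Evaluating \eqref{def:W-storto} at $\sigma=\tau$: since $\tau$ is injective, $\xi R$ and $(\vartheta R')\circ\tau$ coincide with $\xi$ and $\vartheta\circ\tau$ off the positions $x,y$, while at those two positions the contribution is exactly the quantity bounded in the contractivity step, applied to $(\xi(x),\xi(y))$ and $(\vartheta(\tau(x)),\vartheta(\tau(y)))$. This yields $\norm{\xi R-(\vartheta R')\circ\tau}_1\le\norm{\xi-\vartheta\circ\tau}_1$ (pointwise for SRM and GAM, in conditional expectation over $X$ for SEM), and taking expectations and $\eps\downarrow0$ gives $\cW_1(\eta_0,t+1)\le\cW_1(\eta_0,t)$.

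The contractivity computation is routine. The step I expect to demand the most care is the $\cW_1$ case, specifically checking that after relabelling the active pair by $\tau$ the block $R'$ still has the law of $R_1$ and remains independent of $(\xi,\vartheta)$: this is precisely what lets $\vartheta R'$ retain the stationary law while the permutation $\tau$ stays frozen across the step.
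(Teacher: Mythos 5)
Your argument is correct and follows essentially the same route as the paper: prove single-step contractivity of the $\ell_1$ distance under the $2\times 2$ exchange blocks, propagate it through a coupling that uses the invariance of the law of $\eta_\infty$ under one step, and handle $\cW_1$ by relabelling the active pair via the near-optimal permutation $\tau$. You usefully spell out two points the paper compresses into ``clearly'' and ``minor modifications'': that $\vartheta R\sim\eta_\infty$ (via weak convergence, independence, and the continuous-mapping theorem), and that for SEM the block inequality fails pointwise and holds only after averaging over $X$ using $\E[X]=1/2$, which is exactly where the symmetry assumption enters.
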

		Since we are interested in worst-case initializations, we define 
	\begin{equation}\label{def:worst}
W_1(t):= \max_{\eta_0\in\D}W_1(\eta_0,t)\,,\qquad 	\cW_1(t):= \max_{\eta_0\in\D}\cW_1(\eta_0,t)\,.
\end{equation}
Our main result, Theorem \ref{theorem} below, shows that for all three models, both distances	$W_1(t)$ and $\cW_1(t)$ display a cutoff phenomenon at a time of order $n\log n$ with a time window of order  $n\sqrt{\log n}$. Moreover, we obtain an explicit Gaussian cutoff profile, with the relevant constants being uniquely characterized in terms of the first two moments of the logarithm of the size-biased distribution of the variable $X$. 
	
	\begin{remark}\label{rem:worst}
	While for the GAM it is easily seen that $W_1(\eta_0,t)$ and $\cW_1(\eta_0,t)$ are maximized when $\eta_0=\d_{x_0}$ is a Dirac mass at  $x_0$, the analogous property for SRM and SEM does not seem to be straightforward. Our results will however show that this is the case in the sense that $W_1(t)$ and $W_1(\d_{x_0},t)$ (as well as $\cW_1(t)$ and $\cW_1(\d_{x_0},t)$) satisfy the very same cutoff phenomenon for all three models.
	\end{remark}  
	 
	To state our main theorem we introduce some notation. For any redistribution random variable $X\in[0,1]$ satisfying the symmetry and non-degeneracy assumptions,  
	let $\hat X$ be the size-biased version of $X$, that is, 
	$$\P(\hat X\le s)=2{\E[X\car_{X\le s}]}\comma\qquad s\in[0,1]\,,$$ 
	and call 
	\begin{equation}\label{eq:def-h-s}
		\mathfrak{h}=\E[-\log\hat X]=\E[-2X\log X]\,,\qquad \mathfrak{s}^2=\var(\log\hat X)\,,\qquad \mathfrak{r}=\frac{\mathfrak{s}}{\mathfrak{h}}\,.
	\end{equation}
 Finally, define 
	\begin{equation}
		t_{\rm ent}=\frac{n\log n}{2 \mathfrak{h}}\,,\qquad t_{\rm w}=\left(1+\mathfrak{r}\right)\frac{n}{2}\sqrt{\frac{\log n}{\mathfrak{h}}}\comma
	\end{equation}
	with the subscript \textquotedblleft${\rm ent}$\textquotedblright\ indicating the dependence on the entropic parameter $\mathfrak{h}$, in analogy with the terminology in \cite{bordenave_random_2018,bordenave_cutoff2019}.	
	 \begin{theorem}\label{theorem}
	Let $X\in[0,1]$ be an arbitrary random variable satisfying the symmetry and non-degeneracy assumptions. For all three models, 
for all $\beta\in\R$, 
	\begin{equation}\label{eq:th}
		\lim_{n\to\infty}\left|W_1(t_{\rm ent}+\beta   t_{\rm w})-2\Phi\left(-\frac{\beta(1+\mathfrak{r})}{\sqrt{1+\mathfrak{r}^2}}\right)\right|=0\,,
	\end{equation}
	where $\Phi(a)=\frac1{\sqrt{2\pi}}\int_{-\infty}^ae^{-z^2/2}\, \dd z$. Moreover, \eqref{eq:th} holds with $W_1(t)$ replaced by $W_1(\d_{x_0},t)$ for any fixed  $x_0$.  
	The same statements hold if $W_1$ is replaced by $\cW_1$.
\end{theorem}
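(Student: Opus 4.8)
\emph{The plan} rests on one unifying fact: for each of the three models, conditionally on the environment $\omega=(P_s,X_s)_{s\ge1}$ of chosen pairs and redistribution variables, the flow of mass is a genuine Markovian ``tagged‑atom'' random walk $U$ on $\{1,\dots,n\}$ whose invariant law is uniform, so that, for $\eta_0=\delta_{x_0}$, one has $\eta_t(y)=\P_\omega(U_t=y\mid U_0=x_0)$ for SRM and GAM (which conserve mass), and the SEM case is obtained from SRM by the duality built into \eqref{eq:repre1} (the $r^\top$‑evolution is the SRM evolution and the $r$‑evolution is the SEM evolution; $q=q^\top$ makes GAM self‑adjoint), together with the maximum principle \eqref{eq:max}. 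First I would record that $\eta_t=\eta_0R_1\cdots R_t$ and $\eta_\infty=\lim_s\eta_s$ are both linear in $\eta_0$ along a fixed environment; combined with the $\mathfrak S_n$‑invariance of the law of $\omega$, the natural coupling (run both $\eta_t$ and $\eta_\infty$ off the same $\omega$) gives, for any $\eta_0$, $W_1(\eta_0,t)\le\sum_x\eta_0(x)\,\E_\omega\|\eta_t^{\delta_x}-\eta_\infty^{\delta_x}\|_1=\E_\omega\|\eta_t^{\delta_{x_0}}-\eta_\infty^{\delta_{x_0}}\|_1$, reducing the worst case to the Dirac start. Conversely $W_1(t)\ge W_1(\delta_{x_0},t)$, and the same bracketing works for $\cW_1$ using that $\eta\mapsto\sum_x|\eta(x)-\tfrac1n|$ is $1$‑Lipschitz for the permutation‑invariant metric underlying $\cW_1$, together with the a priori fact that $\eta_\infty$ concentrates on the $1/n$‑scale (exactly for GAM/SEM since $\eta_\infty$ is constant, for SRM via a spreading estimate for its stationary state, and for SEM also controlling the $L^2$‑martingale $\sum_x\eta_t(x)$ so that $\|\eta_\infty^{\rm SEM}-\tfrac1n\mathbf 1\|_1\to0$).

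\emph{The entropic mechanism.} Starting from $\delta_{x_0}$, regard the unit mass as a continuum of atoms and follow one chosen uniformly at random, grouping atoms by \emph{history} (their trajectory of stays/jumps up to time $t$). At every step at which the atom's current site lies in $P_s$ — an event of probability $\tfrac2n$ per step, independent across steps, so the number $N_t$ of such ``active'' steps is $\mathrm{Bin}(t,\tfrac2n)$ — the atom's history class is multiplied by the relative size of the fraction it lands in, which in all three models equals $X_s$ with probability $X_s$ and $1-X_s$ with probability $1-X_s$, i.e.\ an independent copy of the size‑biased variable $\hat X$ of \eqref{eq:def-h-s}. Hence this class has mass $m_t=\prod_{j=1}^{N_t}\hat X^{(j)}=e^{-S_t}$ with $S_t=\sum_{j=1}^{N_t}\bigl(-\log\hat X^{(j)}\bigr)$, the $\hat X^{(j)}$ i.i.d.\ and independent of $N_t$; the very same $S_t$ appears for all three models, which is the source of universality. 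The quantity driving the cutoff is the excess mass $\sum_x\bigl(\eta_t(x)-\tfrac1n\bigr)_+$ — it equals $\tfrac12\|\eta_t-\eta_\infty\|_1$ for GAM and, by the reductions above, controls $W_1$ and $\cW_1$ to leading order for all three models — and the crux of the proof is the identification of this excess mass with the mass carried by the ``large'' history classes, namely
\begin{equation*}
	W_1(\delta_{x_0},t)=2\,\P\bigl(S_t<\log n\bigr)+o(1)\,,\qquad n\to\infty\,.
\end{equation*}

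\emph{CLT and the Gaussian profile.} The sum $S_t$ is of compound‑Poisson type with $\lambda:=\tfrac{2t}{n}\to\infty$, so $\E[S_t]=\tfrac{2t}{n}\mathfrak{h}$ and $\var(S_t)=\tfrac{2t}{n}\bigl(\mathfrak{h}^2+\mathfrak{s}^2\bigr)$; a Berry--Esseen bound at the single level $\log n$ (which lies $O(1)$ standard deviations from the mean when $t$ is near $t_{\rm ent}$) gives $\P(S_t<\log n)=\Phi\!\bigl((\log n-\E[S_t])/\sqrt{\var(S_t)}\bigr)+O(1/\sqrt{\log n})$. Plugging $t=t_{\rm ent}+\beta t_{\rm w}$ and using $\tfrac{2\mathfrak h}{n}t_{\rm ent}=\log n$, $\tfrac{2\mathfrak h}{n}t_{\rm w}=(1+\mathfrak r)\sqrt{\mathfrak h\log n}$ and $\var(S_t)=(1+o(1))\,\mathfrak h(1+\mathfrak r^2)\log n$, the numerator becomes $-\beta(1+\mathfrak r)\sqrt{\mathfrak h\log n}$ and the argument of $\Phi$ reduces to $-\beta(1+\mathfrak r)/\sqrt{1+\mathfrak r^2}$, giving \eqref{eq:th} for $W_1(\delta_{x_0},t)$ and hence, by Step~1, for $W_1(t)$, $\cW_1(\delta_{x_0},t)$ and $\cW_1(t)$.

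\emph{The main obstacle} is the displayed identity of the second paragraph — the precise reduction of the $\ell_1$‑distance to the compound‑Poisson tail. The lower bound direction is the easy one ($\eta_t(U_t)\ge m_t$, and $\sum_x(\eta_t(x)-\tfrac cn)_+\ge(1-o(1))\P_\omega(\eta_t(U_t)>\tfrac cn)$ for slowly growing $c$, whose annealed value is $\P(S_t<\log n)-o(1)$). The upper bound is delicate: at times of order $t_{\rm ent}$ the fragmentation has already produced many more than $n$ history classes, so a typical site carries many of them and there is \emph{no} scale separation between $1/n$ and the mass $n^{-1+o(1)}$ of a large class; the crude bound $\sum_x(\eta_t(x)-\tfrac1n)_+\le\P_\omega(\eta_t(U_t)>\tfrac1n)$ overcounts by an order‑one factor, and one must show that the correction $\tfrac1n\#\{x:\eta_t(x)>\tfrac1n\}$ cancels it up to $o(1)$. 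This requires genuine second‑moment / branching‑type control of how classes pile up on sites, and is complicated further by non‑reversibility (GAM), non‑conservation of total mass (SEM, handled through the SRM dual and \eqref{eq:max}), and the absence of an explicit description of $\eta_\infty$ for the SRM (needing instead a robust spreading estimate for its stationary law).
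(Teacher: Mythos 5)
Your plan closely parallels the paper's actual argument: the tagged‑atom/pile picture yielding the size‑biased chain $\hat X$, the binomial count $N_t\sim\mathrm{Bin}(t,2/n)$ of active steps, the compound‑Poisson-type sum $S_t=\sum_{j\le N_t}(-\log\hat X^{(j)})$, and the CLT/Berry--Esseen asymptotics at $t=t_{\rm ent}+\beta t_{\rm w}$ are exactly the content of the paper's Proposition~\ref{lemma:size-pile} and Corollary~\ref{coro:CLT-pile}, and your variance computation and the reduction of the argument of $\Phi$ to $-\beta(1+\mathfrak r)/\sqrt{1+\mathfrak r^2}$ are correct. The linearity/symmetry reduction of worst case to the Dirac start matches \eqref{eq:three0}--\eqref{eq:three01}.

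However, there is a genuine gap: the displayed identity $W_1(\delta_{x_0},t)=2\,\P(S_t<\log n)+o(1)$, which you yourself flag as ``the main obstacle,'' is not proved. What you describe as needing ``second‑moment / branching‑type control of how classes pile up on sites'' is precisely the nontrivial technical core. The paper handles it by (i) introducing the truncation $\eta^{+,\gamma}_t$ of mass from piles of size $\ge e^\psi/n$ with $\psi=\gamma\sqrt{\mathfrak h\log n}$, and proving the lower bound $\cW_1(t)\ge 2\E\|\eta^{+,\gamma}_t\|_1-2\sqrt{ne^{-\psi}\E\|\eta_\infty\|_2^2}$ via a careful $\ell_1$ decomposition, using the uniform bound $n\E\|\eta_\infty\|_2^2=O(1)$ (established separately for each model via Lemmas~\ref{lem:contra1}--\ref{lem:contra2} and duality); (ii) for the upper bound, further splitting out $\eta^*_t(x)=\eta^-_t(x)\car_{n\eta^-_t(x)<e^{3\psi}}$, running $r=c\gamma n\sqrt{\log n}$ extra steps to mix $\eta^*_t$ via the $W_2$-contraction Lemmas~\ref{lemma:AL}--\ref{lemma:AL_HMP}, and then proving Lemma~\ref{lemma:eta*B}, a second‑moment bound on the ``non‑diagonal'' pile overlaps $E^{\rm nd}_t(x)$ showing $\E\|\widehat\eta_t\|_1-\E\|\eta^+_t\|_1\to0$. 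None of this is in your proposal, and your lower‑bound sketch ($\sum_x(\eta_t(x)-c/n)_+\ge(1-o(1))\P_\omega(\eta_t(U_t)>c/n)$) leaves exactly the correction term $\tfrac cn\#\{x:\eta_t(x)>c/n\}$ uncontrolled, which is not obviously $o(1)$ without the pile‑count bound the paper uses. In short: right mechanism and right CLT, but the quantitative $\ell_1$-to-tail reduction is claimed, not proved, and it is the hard part.
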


\subsection{Remarks, related work, and open problems}
	It is instructive to  specialize the result in Theorem \ref{theorem} to the benchmark case $X\sim{\rm Beta}(\alpha,\alpha)$ for some $\alpha>0$ (see also Figure \ref{fig:h-s}). In this case
	\begin{equation}
		\mathfrak{h}=\psi(2\alpha+1)-\psi(\alpha+1)\,,\qquad\mathfrak{s}^2=-\psi'(2\alpha+1)+\psi'(\alpha+1)\,,
	\end{equation}
	where $\psi$ is the digamma function.
	In particular, $\mathfrak{h}$ is monotonically increasing in $\a$, with 
	$$\lim_{\alpha\downarrow 0}\mathfrak{h}=0\,,
	\qquad\lim_{\alpha\to\infty}\mathfrak{h}=\log(2)\,,$$
	while $\mathfrak{s}^2$ satisfies
	$$\lim_{\alpha\downarrow 0}\mathfrak{s}^2=0\,,
	\qquad \lim_{\alpha\to\infty}\mathfrak{s}^2=0
	\,.$$
Notice that, taking the limit $\alpha\to\infty$, one recovers the singular case $X\equiv 1/2$.
\begin{figure}[h]\label{fig:h-s}
	\centering
	\includegraphics[width=7cm]{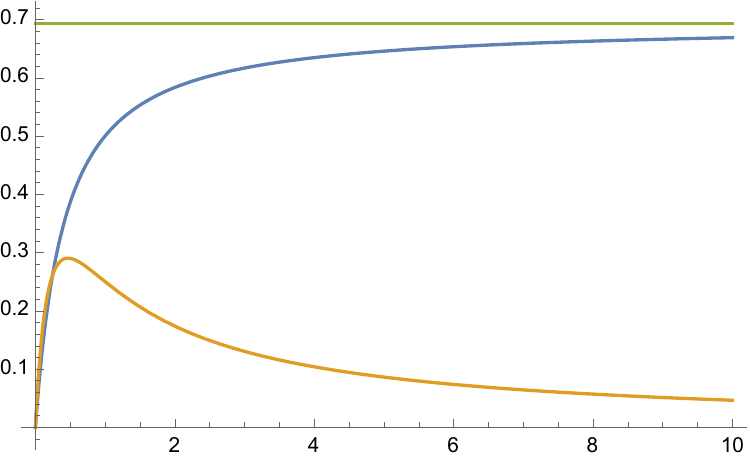}
	\caption{Plot of the functions $\mathfrak{h}$ and $\mathfrak{s}$ given in \eqref{eq:def-h-s} for $X\sim{\rm Beta}(\alpha,\alpha)$, $\alpha>0$. In blue: the function $\alpha\mapsto \mathfrak{h}$. In orange: the function $\alpha\mapsto \mathfrak{s}^2$. In green: the horizontal line at height $\log 2$.}
\end{figure}

As we have seen, when $X\equiv 1/2$, all three models reduce to the mean-field averaging process that was recently analyzed in 
\cite{chatterjee2020phase}. Indeed, our Theorem \ref{theorem} contains the main result of \cite{chatterjee2020phase} as a special case, and may be regarded as a generalization thereof, extending the cutoff phenomenon to a universal behavior in all three families of models with arbitrary redistribution random variable $X$.  The proof of \cite{chatterjee2020phase} is based on a discretization of the energy configuration, while our proof uses a more flexible analysis of a suitable pile dynamics encompassing the energy evolution $\eta_t$; see Section \ref{sec:piledyn}. The analysis of the pile dynamics was recently introduced in  \cite{caputo2024repeated}, where we study a different type of generalization of  the averaging process where equalization occurs for possibly more than two energy variables at a time. We also mention that graphical versions of the averaging  processes have been recently studied, with energy equalizations taking place along the edges of an underlying graph; see \cite{aldous_lecture_2012,quattropani2021mixing,movassagh_repeated2022,caputo_quattropani_sau_cutoff_2023}.

Concerning the SRM, in the special case of Beta redistribution, the mixing time in total variation distance has been studied in  \cite{smith_gibbs_2014}, where it is shown that it is of order $n\log n$, confirming a conjecture in \cite[Section 13.1.4]{aldous-fill-2014}. Establishing the precise location and cutoff  behavior for the total variation distance remains an interesting open problem for these models.  Theorem \ref{theorem} shows that cutoff for the weaker Wasserstein distance occurs on the same timescale. As a consequence, it can be used to obtain meaningful lower bounds on the total variation distance for SRM. In particular, one can check that, in the special case of ${\rm Beta}(\a,\a)$ redistribution variables 	with $\a\in(0,1)$, it provides an estimate which improves the lower bound established 
by a coupon collector argument  in  \cite{smith_gibbs_2014}. 
We remark that similar lower bounds can be deduced, using our results, for the matrix versions of the SRM model considered in  \cite{diaconis_saloff-coste_bounds_2000,oliveira_convergence_2009,jiang_kac_2017}.

\subsection{Organization of the paper}
In Section \ref{sec:piledyn} we introduce the main ideas behind our proofs. In Section \ref{sec:prelim} we collect some preliminary facts as well as the main ingredients to be used in the proof of Theorem \ref{theorem}. Finally, Section \ref{sec:proofs} proves our main results.

	\section{Pile dynamics and proof strategy}
	\label{sec:piledyn}
	\subsection{Pile dynamics}
The proof of Theorem \ref{theorem} will be crucially based on the analysis of a richer representation of the stochastic exchange models introduced above. The associated stochastic process is a version of the splitting process we recently introduced  
 in  \cite{caputo2024repeated}, under the name of \emph{pile dynamics}. Loosely speaking, while the original process is characterized by the variables $\eta_t(x)$, representing  the energy of the $x$-th particle at time $t$, with the initial distribution $\eta_0$ given by the Dirac mass $\d_{x_0}$ at some  $x_0\in V$, the new process shall keep track of the distinct energy fragments produced by each exchange event, so that the energy $\eta_t(x)$ is given by the sum of the energy fragments present at $x$ at time $t$.

In the current setting, the pile dynamics can be formally defined as follows. 	 
 At time zero there is only one pile sitting on particle $x_0$, with energy equal to $\eta_0(x_0)=1$. Whenever there is an event at the pair $(x,y)$, with redistribution variable $X$, each pile sitting at either  $x$ or  $y$ is split  into two new labeled piles according to the following rule, depending on the model under consideration: 

\smallskip

\emph{SRM}: a pile of size $p$ at $x$ is split into two piles of size $Xp$ and $(1-X)p$, respectively, the first stays at $x$ while the second is moved to  $y$;  a pile of size $p$ at $y$ is split into two piles of size $Xp$ and $(1-X)p$, respectively, the first is moved to  $x$, while the second stays at $y$.  

\emph{SEM}: a pile of size $p$ at $x$ is replaced by two piles, both of size $Xp$, the first stays at $x$ while the second is moved to  $y$; a pile of size $p$ at $y$ is replaced by two piles, both of size $(1-X)p$, the first is moved to $x$ while the second stays at $y$.  

\emph{GAM}: a pile of size $p$ at $x$ is split into two piles of size $Xp$ and $(1-X)p$, respectively, the first stays at $x$ while the second is moved to  $y$; a pile of size $p$ at $y$ is split into two piles of size $Xp$ and $(1-X)p$, respectively, the first stays at $y$, while the second is moved to $x$.

\smallskip

In this way, for all three models, at each time $t$, there is at most a finite number of piles at each $x$, and their total size sums up to $\eta_t(x)$. A generic pile is denoted by $\zeta$ and we write $|\zeta|$ for its size. Hence, if $A_t(x)$ denotes the collection of piles at time $t$ at  $x$,  
\begin{equation}\label{eq:piledyn1}
		\eta_t(x)=\sum_{\zeta\in A_t(x)}|\zeta|\,.
	\end{equation}
An important feature of this construction is that the number of elements in the set $A_t(x)$ depends only on the pairs $(x,y)$ chosen at each update. In particular, it does not depend on the realization of the redistribution variables $X$.	

\subsection{Energy from large piles} It will be key to consider truncations of the pile dynamics obtained by restricting to sizes above  some prescribed threshold $\theta\in[0,1]$. With this in mind, we define
	\begin{equation}
		\eta^\theta_t(x)=\sum_{\zeta\in A_t(x)}|\zeta|\car_{|\zeta|\ge \theta}\,.
	\end{equation}
Clearly, $\eta_t^{\theta}=\eta_t$ for $\theta=0$. The following property will be at the heart of our analysis.
	\begin{proposition}[Expected energy from piles above threshold $\theta$]\label{lemma:size-pile}
		For all $t\ge 1$ and $\theta\in [0,1]$,  and for any of the three models, with initial configuration given by a Dirac mass,
		\begin{equation}\label{eq:size-pile}
			\E\big[\|\eta^\theta_t\|_1\big]= \P\bigg(\textstyle{\sum_{i=1}^T}  \log \hat X_i \ge \log \theta\bigg)\comma 
		\end{equation}
		where $\hat X_i$ are i.i.d.\ copies of the size-biased redistribution variable $\hat X$, and $T\sim {\rm Bin}(t,\frac{2}{n})$ is an independent binomial random variable with parameters $t$ and $2/n$.   In particular, for $\theta=0$ (with the convention $\log 0\eqdef -\infty$) we have $\E[\|\eta_t\|_1]=1$ for all $t\ge 1$.
	\end{proposition}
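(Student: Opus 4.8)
The plan is to track a single tagged pile through the dynamics and compute the probability that its size stays above $\theta$. Fix one of the three models and start from $\eta_0 = \d_{x_0}$, so that at time $0$ there is a unique pile of size $1$ at $x_0$. The key structural observation, already noted after \eqref{eq:piledyn1}, is that the \emph{combinatorial} tree of splittings — which pile splits into which, and the associated particle moves — depends only on the sequence of random pairs $(x,y)$, and is independent of the redistribution variables $X$. So I would first argue that
\[
\E\big[\|\eta_t^\theta\|_1\big] = \E\Big[\sum_{\zeta \in A_t}|\zeta|\,\car_{|\zeta|\ge\theta}\Big],
\]
where $A_t = \bigcup_x A_t(x)$ is the full collection of piles, and then expand the sum over the (random, but $X$-independent) set of ``pile lineages'' present at time $t$. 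Crucially, because $\|\eta_t\|_1 = 1$ almost surely for SRM and GAM (energy conservation) and also for SEM (every pile of size $p$ is replaced by \emph{two} piles each carrying the full redistribution factor times $p$, so $\sum_{\zeta\in A_t}|\zeta|$ is itself conserved — this is the ``duality'' remark and must be checked for SEM specifically), the quantity $\sum_{\zeta\in A_t}|\zeta|$ equals $1$ deterministically. Hence $\big(|\zeta|\big)_{\zeta\in A_t}$ is, conditionally on the pair-sequence, a probability vector, and $\E[\|\eta_t^\theta\|_1]$ is exactly the probability that a pile $\zeta$, \emph{chosen with probability proportional to its size} $|\zeta|$, satisfies $|\zeta|\ge\theta$.

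Next I would identify the law of $\log|\zeta|$ under this size-biased choice. Sampling a pile proportionally to its size is the same as the following sequential procedure: start at the root pile (size $1$); at each splitting event along the way, the root pile of that event gets replaced by (in SRM/GAM) two piles of relative sizes $X$ and $1-X$, or (in SEM) two piles both of relative size $X$ resp.\ $1-X$ — and we follow the descendant with probability equal to its \emph{relative} size. In SRM and GAM the two relative sizes are $X$ and $1-X$, summing to one, so we follow the ``$X$-child'' with probability $X$; by the very definition of the size-biased variable $\hat X$ (with $\P(\hat X \le s) = 2\E[X\car_{X\le s}]$, which by symmetry is $\E[X\car_{X\le s}] + \E[(1-X)\car_{1-X\le s}]$), the multiplicative factor picked up at that event is distributed as $\hat X$. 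In SEM the two relative sizes are $X$ and $1-X$ again (one child carries factor $X$, the other factor $1-X$, after normalizing by the conserved total), so the same computation applies. Therefore, if the tagged lineage undergoes $T$ splitting events, then $\log|\zeta| = \sum_{i=1}^T \log\hat X_i$ with $\hat X_i$ i.i.d.\ copies of $\hat X$, independent of $T$.

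It remains to identify the law of $T$, the number of splitting events affecting the tagged lineage up to time $t$. At each time step an ordered pair $(x,y)$ of distinct particles is chosen uniformly; the tagged pile currently sits at some particle $z$, and it is split precisely when the unordered pair $\{x,y\}$ contains $z$, which happens with probability $\tfrac{2(n-1)}{n(n-1)} = \tfrac2n$, independently across steps and independently of everything else (the location of the pile is irrelevant to this probability since every particle is symmetric). Hence $T \sim \mathrm{Bin}(t, 2/n)$, independent of $(\hat X_i)$. Combining the three displays gives
\[
\E\big[\|\eta_t^\theta\|_1\big] = \P\big(|\zeta| \ge \theta\big) = \P\Big(\textstyle\sum_{i=1}^T \log\hat X_i \ge \log\theta\Big),
\]
and the case $\theta = 0$ is immediate since $\sum_{i=1}^T\log\hat X_i > -\infty$ a.s.\ (the $\hat X_i$ are a.s.\ positive), recovering $\E[\|\eta_t\|_1]=1$.

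The main obstacle I anticipate is making the ``size-biased sampling equals following a random lineage'' step fully rigorous: one must set up the pile genealogy as a well-defined random labelled forest, verify that $\sum_{\zeta\in A_t}|\zeta|=1$ almost surely for \emph{all three} models (the SEM case is the one requiring care, since energy is not conserved — what is conserved is the total pile mass, and one should check the bookkeeping so that each event replaces a pile of mass $p$ by two piles of masses summing to $2\cdot(\text{factor})\cdot p$ in a way that still telescopes to $1$; in fact for SEM a pile of mass $p$ at $x$ becomes two piles of mass $Xp$, so the total pile mass is \emph{not} conserved pointwise and one instead should track mass against the dual weighting — this needs to be reconciled with the clean statement, most likely by observing that the SEM pile dynamics is the SRM pile dynamics read backwards, or by a direct induction on $t$). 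Once the genealogical picture and the conservation statement are in place, the remaining computation — recognizing $\hat X$ and the $\mathrm{Bin}(t,2/n)$ — is routine conditioning.
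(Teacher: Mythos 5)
Your proposal is a nice lineage/size-biasing argument that works cleanly for SRM and GAM, but it has a genuine gap for SEM which you flag but do not repair. The gap is real: in SEM, an event at $(x,y)$ replaces a pile of size $p$ at $x$ by \emph{two} piles each of size $Xp$ (not $Xp$ and $(1-X)p$), so the total pile mass $\sum_{\zeta\in A_t}|\zeta|=\|\eta_t\|_1$ is not deterministically $1$, and the parenthetical remark in your proposal that ``the two relative sizes are $X$ and $1-X$ again \dots after normalizing by the conserved total'' is simply not correct. As a consequence, ``sampling a pile with probability proportional to its size equals following a uniform random lineage'' breaks down, and the identification of the one-step multiplicative factor with the size-biased variable $\hat X$ does not go through for SEM via your route. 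The proposition asserts only $\E[\|\eta_t\|_1]=1$, not $\|\eta_t\|_1=1$ almost surely, and for SEM only the former is true.

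The paper avoids this entirely by never invoking normalization of the pile sizes. It partitions the piles by the number of updates $s$ they have experienced, sets $A_{s,t}$ for the collection with update count $s$, and observes two facts: \emph{(i)} the cardinalities $|A_{s,t}|$ depend only on the chosen pairs $(x,y)$ and are therefore the same for all three models and independent of the $X$'s; \emph{(ii)} by the symmetry ${\rm Law}(X)={\rm Law}(1-X)$, the \emph{marginal} law of $|\zeta|$ for any $\zeta\in A_{s,t}$ is that of $\prod_{i=1}^s X_i$, in all three models, regardless of whether the total mass is conserved. Linearity of expectation then gives $\E[\|\eta_t^\theta\|_1]=\sum_s\E[|A_{s,t}|]\,\E[\prod_{i=1}^s X_i\,\car_{\prod X_i\ge\theta}]$. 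Finally, $\E[|A_{s,t}|]=2^s\P(T=s)$ is computed by specializing to $X\equiv 1/2$, where all piles with $s$ updates have size exactly $2^{-s}$ and a tagged unit of energy experiences an update with probability $2/n$ at each step, giving $T\sim{\rm Bin}(t,2/n)$. Combining these pieces and moving the $2^s$ into the product produces the size-biased variables $\hat X_i=2X_iX_i'$ in distribution, exactly as you had. So your argument and the paper's share the same core inputs ($X$-independence of the genealogy, ${\rm Bin}(t,2/n)$ update count, and symmetry producing $\hat X$), but the paper's decomposition by update count makes the SEM case painless, whereas your approach would require a separate argument (e.g.\ a comparison to SRM via the shared genealogy, or the duality you allude to) to handle it.
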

\begin{proof}	
	Let $A_t=\cup_x\, A_t(x)$ denote the set of all piles at time $t$. By keeping track of the number of updates experienced by each individual pile, we may define the variable $u_t(\zeta)$, for any $\zeta\in A_t$, as the number of updates involving the given pile $\zeta$ up to time $t$.  Let $A_{s,t}$ denote the set of piles $\zeta\in A_t$ such that $u_t(\zeta)=s$, $s=0,\dots,t$. Since the variables $u_t$ only depend on the choice of edges $(x,y)$ at each update, the cardinality $|A_{s,t}|$ of the set $A_{s,t}$ is independent of the redistribution random variables $X_i$. Moreover, conditionally on the cardinality of $A_{s,t}$, using the symmetry ${\rm Law}(X)={\rm Law}(1-X)$, and the independence of the redistribution variables, it follows that for each pile  $\zeta\in A_{s,t}$, the size $|\zeta|$ has the law of $ \prod_{i=1}^{s}X_i$, where $X_i$ are i.i.d.\ copies of $X$. 
		Therefore, in all three models, for any $t\in\N$, $\theta\in [0,1]$, 
		\begin{align*}
			\E\left[\|\eta^\theta_t\|_1\right]&= \sum_{s=0}^t\E\bigg[\textstyle{\sum_{\zeta\in A_{s,t}}} |\zeta|\car_{|\zeta|\ge\theta}  \bigg]\\
			&=\sum_{s=0}^t\E\bigg[\E\bigg[\textstyle{\sum_{\zeta\in A_{s,t}}} |\zeta|\car_{|\zeta|\ge\theta} \,\Big\rvert\, |A_{s,t}|\bigg] \bigg]\\
			&=\sum_{s=0}^t\E\big[| A_{s,t} |\big] \E\left[\textstyle{\prod_{i=1}^{s}}X_i\car_{\prod_{i=1}^{s}X_i\ge\theta}\right] \,.
		\end{align*}
	To compute $\E\big[| A_{s,t} |\big]$, let $T={\rm Bin}(t,\tfrac2n)$ denote the binomial random variable with parameters $t$, $2/n$, and observe that 
		\begin{equation}\label{eq:claim-Ast}
			\E\big[| A_{s,t} |\big]=2^s\P\left(T=s\right)\,.
		\end{equation}
		To see this, one may argue as follows. Since $|A_{s,t}|$ is independent of the random variables $X_i$, we may consider the standard averaging process where $X\equiv1/2$. For integers $0\le s\le t$, let $e_{s,t}$ denote the total energy coming from piles of size exactly equal to $2^{-s}$. Then $e_{s,t}=2^{-s}|A_{s,t}|$.  Indeed, $e_{s,t}$ can be obtained by summing over all piles which experienced exactly $s$ updates. Moreover, at any given time, the probability that a specified pile experiences an update is equal to $2/n$.  In conclusion, 
		\begin{equation}
			2^{-s}\E\big[| A_{s,t} |\big]=\E[e_{s,t}]=\P(T=s)\,, 
		\end{equation}
		which proves \eqref{eq:claim-Ast}.
				Once we have the identity \eqref{eq:claim-Ast}, it follows that 
			\begin{align*}
			\E\left[\|\eta_t^\theta\|_1\right ]
			&=\sum_{s=0}^t \P\left(T=s\right) \E\left[\textstyle{\prod_{i=1}^{s}}2X_i\car_{\prod_{i=1}^{s}X_i\ge\theta}\right] \\
&=\sum_{s=0}^t \P\left(T=s\right)\P\left(\textstyle{\prod_{i=1}^{s}} \hat X_i\ge\theta\right)=\P\left(\textstyle{\prod_{i=1}^{T}}\hat X_i\ge \theta\right)\,.
		\end{align*}
		The proof is concluded by taking logarithms.
	\end{proof}

	\subsection{Strategy of proof}
	Let us briefly outline the main steps in the proof of Theorem \ref{theorem}.
	Once we have the identity in Proposition \ref{lemma:size-pile}, the proof   of Theorem \ref{theorem} will be based on the following steps. 
	 	 Define $\psi =\gamma \sqrt{\mathfrak h\log n}$, with a parameter $\gamma>0$ to be tuned, and write $\eta^{+,\g}_t:=\eta^\theta_t$ with $\theta:=\frac1ne^\psi$, that is,
\begin{equation}\label{eq:def-eta+g}
\eta_t^{+,\g}(x)=\sum_{\zeta\in A_t(x)} |\zeta|\car_{n|\zeta|\ge e^{\psi}}\,.
\end{equation}
Recall the notation from Theorem \ref{theorem}. 
We will see that, by
an application of the central limit theorem, Proposition \ref{lemma:size-pile} implies the following statement. 
\begin{corollary}\label{coro:CLT-pile}
		Fix $\gamma>0$ and let $\psi=\gamma\sqrt{\mathfrak{h}\log n}$, $t=t_{\rm ent}+\beta t_{\rm w}$ for some $\beta\in\R$ and define $\eta^{+,\g}_t$ as in \eqref{eq:def-eta+g}, with  $\eta_0=\d_{x_0}$. Then
		\begin{equation}
			\lim_{n\to \infty}	\left|\E\left[\|\eta^{+,\g}_t\|_1\right]-\Phi\left(-\frac{\beta(1+\mathfrak{r})+\gamma}{\sqrt{1+\mathfrak{r}^2}}\right)\right|=0 \,.
		\end{equation}
	\end{corollary}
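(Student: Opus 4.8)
The plan is to reduce Corollary~\ref{coro:CLT-pile} to a central limit theorem for the sum $S_T := \sum_{i=1}^T \log \hat X_i$ appearing in Proposition~\ref{lemma:size-pile}. By that proposition applied with the threshold $\theta = \tfrac1n e^\psi$, we have exactly
\begin{equation}\label{eq:plan-identity}
\E\big[\|\eta^{+,\g}_t\|_1\big] = \P\big(S_T \ge \log\theta\big) = \P\big(S_T \ge -\log n + \psi\big) = \P\big(-S_T \le \log n - \psi\big),
\end{equation}
where $T\sim\mathrm{Bin}(t,\tfrac2n)$ is independent of the i.i.d.\ sequence $(\hat X_i)$. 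Since $\E[-\log\hat X] = \mathfrak h$ and $\var(\log\hat X)=\mathfrak s^2$, the strategy is to show that $-S_T$, suitably centered and scaled, is asymptotically standard Gaussian, and then to match the centering and scaling with the quantity $\log n - \psi$ on the right-hand side of \eqref{eq:plan-identity}.

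First I would record the mean and variance of $T$: with $t = t_{\rm ent}+\beta t_{\rm w} = \tfrac{n\log n}{2\mathfrak h} + \beta(1+\mathfrak r)\tfrac n2\sqrt{\tfrac{\log n}{\mathfrak h}}$, we get $\E[T] = \tfrac2n t = \tfrac{\log n}{\mathfrak h} + \beta(1+\mathfrak r)\sqrt{\tfrac{\log n}{\mathfrak h}}$ and $\var(T) = t\cdot\tfrac2n(1-\tfrac2n) = \tfrac{\log n}{\mathfrak h}(1+o(1))$, both of order $\log n$. Next, by the law of total variance (or by a direct computation using Wald's identities), $\E[-S_T] = \mathfrak h\,\E[T]$ and $\var(-S_T) = \mathfrak h^2\var(T) + \mathfrak s^2\,\E[T]$. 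Plugging in: $\E[-S_T] = \log n + \beta(1+\mathfrak r)\sqrt{\mathfrak h\log n} = \log n + \beta(1+\mathfrak r)\psi/\gamma$, and $\var(-S_T) = \mathfrak h^2\cdot\tfrac{\log n}{\mathfrak h}(1+o(1)) + \mathfrak s^2\cdot\tfrac{\log n}{\mathfrak h}(1+o(1)) = (\mathfrak h + \tfrac{\mathfrak s^2}{\mathfrak h})\log n\,(1+o(1)) = \mathfrak h(1+\mathfrak r^2)\log n\,(1+o(1))$. Therefore the standardized threshold is
\begin{equation}\label{eq:plan-standardize}
\frac{(\log n - \psi) - \E[-S_T]}{\sqrt{\var(-S_T)}} = \frac{-\psi - \beta(1+\mathfrak r)\sqrt{\mathfrak h\log n}}{\sqrt{\mathfrak h(1+\mathfrak r^2)\log n}}\,(1+o(1)) = -\frac{\gamma + \beta(1+\mathfrak r)}{\sqrt{1+\mathfrak r^2}}\,(1+o(1)),
\end{equation}
using $\psi = \gamma\sqrt{\mathfrak h\log n}$. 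This is exactly the argument of $\Phi$ in the claimed limit, so the result follows once we establish the CLT $\big(-S_T - \E[-S_T]\big)/\sqrt{\var(-S_T)} \Rightarrow \mathcal N(0,1)$.

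For the CLT itself, the cleanest route is to verify a Lindeberg/Lyapunov-type condition for the randomly-stopped sum. Since $T = T_n \to \infty$ in probability (indeed $T_n/\E[T_n]\to 1$ by the law of large numbers for the binomial, as $\var(T_n)/\E[T_n]^2 \to 0$), and since $\log\hat X$ has finite variance (in fact $|\log\hat X|$ has finite exponential moments of low order because $\hat X$ has a density proportional to $2x$ times that of $X$ near $0$, so $\P(\hat X\le s)\le s^2$—wait, one should only invoke what is needed: finiteness of $\mathfrak h$ and $\mathfrak s^2$, which hold under the standing assumptions), Anscombe's theorem gives $\big(S_T - \mathfrak h\cdot(-1)\cdot\E[T]\big)/\sqrt{\mathfrak s^2\E[T]}\Rightarrow \mathcal N(0,1)$. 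However, here the randomness of $T$ contributes to the variance at the same order as the summands, so Anscombe alone is not enough; instead I would write $-S_T = \mathfrak h T + \big(-S_T - \mathfrak h T\big)$ and observe that, conditionally on $T$, the second term is a centered sum of $T$ i.i.d.\ variables of variance $\mathfrak s^2$, while $\mathfrak h T$ is itself asymptotically Gaussian (de~Moivre–Laplace for the binomial). Conditioning on $T$, the characteristic function of $\big(-S_T-\E[-S_T]\big)/\sqrt{\var(-S_T)}$ factorizes, and a dominated-convergence argument—using that $T/\log n \to 1/\mathfrak h$ in probability and the CLT for the i.i.d.\ part holds uniformly over $T$ in a window around its mean—yields the Gaussian limit with the combined variance $\mathfrak h^2\var(T) + \mathfrak s^2\E[T]$. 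The main obstacle is precisely this bookkeeping: one must control the sum in the regime where both $T$ and the conditional fluctuations are of order $\log n$ and $\sqrt{\log n}$ respectively, ensuring the two Gaussian contributions combine correctly rather than one swamping the other, and ruling out boundary effects from the event $\{T$ atypically small$\}$ (which has superpolynomially small probability by a Chernoff bound for the binomial, hence is negligible). Once the joint CLT is in place, \eqref{eq:plan-identity} and \eqref{eq:plan-standardize} close the argument.
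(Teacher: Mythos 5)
Your proposal is correct and follows essentially the same strategy as the paper: both start from Proposition~\ref{lemma:size-pile}, both condition on $T$, and both combine the Gaussian fluctuations of $T$ (from de~Moivre--Laplace for the binomial) with the conditional CLT of the i.i.d.\ sum $\sum_{i\le m}\log\hat X_i$. Your first- and second-moment bookkeeping (law of total variance giving $\E[-S_T]=\mathfrak h\,\E[T]$ and $\var(-S_T)=\mathfrak h^2\var(T)+\mathfrak s^2\E[T]$) and the resulting standardized threshold $-(\gamma+\beta(1+\mathfrak r))/\sqrt{1+\mathfrak r^2}$ are exactly right and match the paper's. Where the two differ is in how the final random-sum CLT is justified. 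You sketch a characteristic-function factorization argument (a Robbins-type CLT for compound sums), but you correctly flag that this is ``the main obstacle'' and leave the details to a hand wave. The paper implements precisely this step concretely: it substitutes $m(\alpha)=\lfloor\E[T]+\alpha\sqrt{\var T}\rfloor$, writes $\E[\|\eta_t^\theta\|_1]$ as $\int\varphi_n(\alpha)f_n(\alpha)\,\dd\alpha$ with $\varphi_n(\alpha)=\P(T=m(\alpha))\sqrt{\var T}$ and $f_n(\alpha)=\P(\sum_{s\le m(\alpha)}\log\hat X_s^{-1}<\log n-\psi)$, invokes the local CLT for $T$ to get $\varphi_n\to\varphi$, the ordinary CLT to get $f_n\to f$ pointwise, and concludes via Scheff\'e's lemma and dominated convergence that $\int\varphi_n f_n\to\int\varphi f=\Phi\bigl(-\tfrac{\beta(1+\mathfrak r)+\gamma}{\sqrt{1+\mathfrak r^2}}\bigr)$. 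Your characteristic-function route would also work and is a standard way to prove such CLTs, but a clean write-up would need to supply the missing uniformity/dominated-convergence estimate that you gesture at --- which is exactly what the paper's Scheff\'e/integral argument accomplishes in explicit terms. So: same core idea, with the paper's proof being the fully fleshed-out version of the ``bookkeeping'' you identified as the crux.
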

The proof will be given in Section \ref{sec:prelim}.
Thanks to Corollary \ref{coro:CLT-pile},
the proof of the lower bound in Theorem \ref{theorem} can be reduced to proving a lower bound of the form 
\begin{equation}\label{eq:eta+2}
\liminf_{n\to\infty} \left(W_1(t)-  2\E\left[\|\eta^{+,\g}_t\|_1\right]\right)\ge 0\,,
\end{equation}
for all $\g>0$, where $\eta^{+,\g}_t$ is taken as in Corollary \ref{coro:CLT-pile}. Taking the limit $\g\to 0$ then establishes the desired lower bound. 
We shall actually establish \eqref{eq:eta+2} for the distance $\cW_1(t)$; see Section \ref{sec:proofs}. On the other hand,  the upper bound in Theorem \ref{theorem} will be reduced to proving a bound of the form 
\begin{equation}\label{eq:eta+22}
\limsup_{n\to\infty} \left(W_1(t_\g)-   2\E\left[\|\eta^{+,\g}_{t_\g}\|_1\right]\right)\le 0\,,
\end{equation}
where $t_\g = t_{\rm ent}+\beta t_{\rm w} + c\g n\sqrt{\log n}$, for a suitable constant $c>0$, and $\eta^{+,\g}_t$ is taken as in Corollary \ref{coro:CLT-pile}.  The conclusion then follows by using Corollary \ref{coro:CLT-pile} and then taking the limit $\g\to 0$. 

The proof of \eqref{eq:eta+2}--\eqref{eq:eta+22} will use, among other things,  the fact that once we have removed the piles with height larger than $\theta=\frac1ne^\psi$, an $L^2$ estimate will be sufficient to control the distance to stationarity. In turn, the analysis of the $L^2$ norm will be based on the following estimates which take a slightly different form depending on the model.

In analogy with \eqref{def:W1-dritto}, let $W_2$ denote the $2$-Wasserstein distance
\begin{align}\label{def:W2-dritto}
	W^2_2(\eta_0,t)	
	= \inf_{(\xi,\vartheta)\sim(\eta_t,\eta_{\infty})}\E\big[\|\xi-\vartheta\|^2_2\big]\,,
\end{align}
where $\|v\|_2$ denotes the $L^2$-norm of the vector $v$:  $\|v\|^2_2=\sum_{x=1}^nv(x)^2$. 
\begin{lemma}[$W_2$ contraction for SRM and GAM]\label{lemma:AL}
For any $\eta_0\in \D$, $t\in\N$, the SRM and GAM satisfy
	\begin{equation}\label{eq:2contraction}
		W_2^2(\eta_0,t)\le 	e^{-\lambda t}W_2^2(\eta_0,0)\,,
	\end{equation}
	where the rate $\lambda$ is given by, respectively in the two models,
\begin{equation}\label{eq:2gap}
		\lambda_{\rm SRM} = 
		\frac2n \tonde{1-2\E[X^2]\frac{n-2}{n-1}}\,,\quad 
		\lambda_{\rm GAM} = \frac{2}{n-1}(1-2\E[X^2])\,.
	\end{equation}
	\end{lemma}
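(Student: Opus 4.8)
The plan is to exploit the matrix representation $\eta_t = \eta_0 R_1\cdots R_t$ from \eqref{eq:repre} and track the evolution of the squared $\ell_2$-norm of the centered configuration. Write $\bar\eta_t := \eta_t - \eta_\infty$; for the GAM, $\eta_\infty(x)\equiv 1/n$ is the deterministic uniform vector, and since each $R_i$ fixes it ($\frac1n\mathbf 1$ is a left eigenvector with eigenvalue $1$ for the $q$-block because rows of $q$ sum to $1$), we have $\bar\eta_t = \bar\eta_0 R_1\cdots R_t$. For the SRM the analogous identity requires a little more care since the stationary state is non-degenerate, but one can instead run two coupled copies $\eta_t,\eta'_t$ driven by the same matrices $R_i$, set $\bar\eta_t = \eta_t - \eta'_t$, and observe $\bar\eta_t = \bar\eta_0 R_1\cdots R_t$; taking $\eta'_0$ distributed as $\eta_\infty$ (stationary) and optimizing the coupling then yields the $W_2$ bound. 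So in both cases it suffices to show $\E\big[\|\bar\eta_{t+1}\|_2^2\,\big|\,\mathcal F_t\big] \le (1-\lambda)\|\bar\eta_t\|_2^2$ with the stated $\lambda$, and then iterate and use $1-\lambda\le e^{-\lambda}$.

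The core computation is a single-step contraction estimate. Condition on $\bar\eta_t = v$ and on the random pair $(x,y)$, and average over the redistribution variable $X$ and the uniform choice of pair. Only the two coordinates $v(x),v(y)$ change. For the GAM block $q$: $(v(x),v(y))\mapsto (Xv(x)+(1-X)v(y),\,(1-X)v(x)+Xv(y))$, so the contribution $v(x)^2+v(y)^2$ becomes, in expectation over $X$ (using $\E[X]=\E[1-X]=1/2$, symmetry giving $\E[X(1-X)]=\frac12-\E[X^2]$, and $\E[X^2]=\E[(1-X)^2]$), equal to $2\E[X^2](v(x)^2+v(y)^2) + 2(1-2\E[X^2])\cdot\frac{(v(x)+v(y))^2}{2}$ — wait, more simply: $\E[(Xv(x)+(1-X)v(y))^2 + ((1-X)v(x)+Xv(y))^2] = 2\E[X^2](v(x)^2+v(y)^2) + 4\E[X(1-X)]\,v(x)v(y)$. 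Expanding and subtracting the old value $v(x)^2+v(y)^2$ gives a decrement of $(1-2\E[X^2])\big(v(x)^2+v(y)^2-2v(x)v(y)\big) = (1-2\E[X^2])(v(x)-v(y))^2$. Averaging uniformly over the $n(n-1)$ ordered pairs, $\E\big[\sum_{x<y}(v(x)-v(y))^2\big]\cdot\frac{2}{n(n-1)}$, and using the identity $\sum_{x,y}(v(x)-v(y))^2 = 2n\|v\|_2^2 - 2(\sum_x v(x))^2$ together with $\sum_x v(x)=0$ (true for GAM since both copies sum to $1$; for SRM the difference of two simplex vectors sums to $0$), one gets the expected decrement $= \frac{2}{n(n-1)}\cdot n\|v\|_2^2\cdot(1-2\E[X^2]) = \frac{2}{n-1}(1-2\E[X^2])\|v\|_2^2$, matching $\lambda_{\rm GAM}$. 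The SRM block $r$ has both output coordinates equal to the conserved sum $v(x)+v(y)$ rescaled — actually $(Xs,(1-X)s)$ with $s=v(x)+v(y)$ — and a parallel computation, now without a zero-sum simplification on the pair but using conservation, produces the extra factor $\frac{n-2}{n-1}$ in $\lambda_{\rm SRM}$; this I would present carefully since it is where the two models genuinely differ.

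The main obstacle — really the only subtle point — is handling the SRM, where $\eta_\infty$ is genuinely random, so one cannot simply center and must set up the synchronous coupling of $\eta_t$ with a stationary copy and verify that the difference vector still evolves by right-multiplication by the same $R_i$ (it does, since $R_i$ acts linearly and identically on both). One must also check that this coupling is admissible in the infimum defining $W_2^2(\eta_0,t)$ and that $\E[\|\bar\eta_0\|_2^2]$ can be taken to be $W_2^2(\eta_0,0)$ by choosing the optimal initial coupling, which exists since both laws live on the compact simplex. Everything else is the elementary second-moment bookkeeping sketched above, iterated over $t$ steps via the tower property. I would organize the write-up as: (i) reduce to the one-step bound via the coupling/centering; (ii) do the GAM block computation; (iii) do the SRM block computation; (iv) average over pairs using the $\sum(v(x)-v(y))^2$ identity and conclude.
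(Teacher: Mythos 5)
Your proposal is correct and follows essentially the same route as the paper: a synchronous coupling driven by the same update matrices, reduction to a one-step second-moment computation on the difference vector $v$ (which satisfies $\langle v\rangle=0$, killing the extra term), followed by iteration and $1-\lambda\le e^{-\lambda}$. The paper packages the one-step step as standalone identities for $\E[\|vR_1\|_2^2]$ and $\E[\|vQ_1\|_2^2]$ (Lemmas~\ref{lem:contra1} and~\ref{lem:contra2}) rather than as a decrement of $(v(x)-v(y))^2$ averaged over pairs, but the content is the same, and your SRM vs.\ GAM distinction correctly produces the $\frac{n-2}{n-1}$ factor.
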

In the SEM, because of the non-conservation of total energy, we shall instead use the following bound to control the $L^2$-norm. 
We write $\langle \eta\rangle = \frac1n\sum_x \eta(x)$ for the average energy per particle of the configuration $\eta \in \R_+^n$. 
\begin{lemma}[$W_2$ contraction for SEM]\label{lemma:AL_HMP}
For any $\eta_0\in \D$ and $t\in\N$, the SEM satisfies
	\begin{equation}\label{eq:contraction-hpm}
	\E	\left[\|\eta_t-\langle \eta_t\rangle\|_2^2\right]\le 
		e^{-\lambda t} 	\|\eta_0-\langle \eta_0\rangle\|_2^2,
	\end{equation}
	with rate $\lambda$ given by
\begin{equation}\label{eq:2gaps}
		\lambda_{\rm SEM} = 
		\frac{2}{n-1}(1-2\E[X^2])\tonde{1+ \frac{4\E[X^2]-1}{n(1-2\E[X^2])}}\,.
	\end{equation}
	\end{lemma}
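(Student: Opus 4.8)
The strategy is to track the evolution of the ``centered'' $L^2$-functional $f(\eta) \eqdef \|\eta - \langle\eta\rangle\|_2^2 = \sum_x \eta(x)^2 - n\langle\eta\rangle^2$ under one step of the SEM and show that $\E[f(\eta_{t+1})\mid \eta_t] \le (1-\lambda)\,f(\eta_t)$ with $1-\lambda \le e^{-\lambda t}$, after which iteration gives the claim. First I would fix a configuration $\eta$ and compute, for a single update at an ordered pair $(x,y)$ with redistribution variable $X$, how $\sum_z \eta(z)^2$ changes: only the two coordinates $x,y$ move, from $(\eta(x),\eta(y))$ to $(X\eta(x)+(1-X)\eta(y),\,X\eta(x)+(1-X)\eta(y))$ (both equal, by the SEM rule with matrix $r^\top$). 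So the new pair contributes $2\bigl(X\eta(x)+(1-X)\eta(y)\bigr)^2$ in place of $\eta(x)^2+\eta(y)^2$. I would next take expectation over $X$, using symmetry ${\rm Law}(X)={\rm Law}(1-X)$ (hence $\E[X]=\tfrac12$ and $\E[X(1-X)] = \tfrac12 - \E[X^2]$), to get the expected new contribution as an explicit combination of $\eta(x)^2$, $\eta(y)^2$, $\eta(x)\eta(y)$; the cross term is where the contraction comes from. Then I would average over the uniform ordered pair $(x,y)$, $x\neq y$, which turns $\sum_{x\neq y}\eta(x)\eta(y) = (n\langle\eta\rangle)^2 - \sum_x\eta(x)^2$ into the ingredients of $f$.

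The key identities: writing $p \eqdef \E[X^2]$, one update replaces $\eta(x)^2 + \eta(y)^2$ (in expectation over $X$) by $2p(\eta(x)^2+\eta(y)^2) + 2(1-2p)\eta(x)\eta(y)$, i.e.\ the change at the pair is $(2p-1)(\eta(x)^2+\eta(y)^2) + 2(1-2p)\eta(x)\eta(y) = -(1-2p)\bigl(\eta(x)-\eta(y)\bigr)^2$. Averaging over the $n(n-1)$ ordered pairs,
\begin{equation}
\E\bigl[\,\textstyle\sum_z \eta_{t+1}(z)^2 \mid \eta_t\bigr] = \textstyle\sum_z \eta_t(z)^2 - \frac{1-2p}{n(n-1)}\sum_{x\neq y}\bigl(\eta_t(x)-\eta_t(y)\bigr)^2,
\end{equation}
and $\sum_{x\neq y}(\eta(x)-\eta(y))^2 = 2n\sum_x\eta(x)^2 - 2(\sum_x\eta(x))^2 = 2n\,f(\eta)$. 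Meanwhile $\langle\eta_{t+1}\rangle$ is \emph{not} conserved by SEM, so I also need $\E[\langle\eta_{t+1}\rangle^2\mid\eta_t]$: a single update changes $n\langle\eta\rangle = \sum_z\eta(z)$ by $2(X\eta(x)+(1-X)\eta(y)) - \eta(x) - \eta(y)$, whose $X$-expectation is $0$ but whose second moment is not; a short computation gives $\E[(n\langle\eta_{t+1}\rangle)^2\mid\eta_t] = (n\langle\eta_t\rangle)^2 + \frac{4p-1}{n(n-1)}\sum_{x\neq y}(\eta_t(x)-\eta_t(y))^2 \cdot \tfrac12$ (the exact constant to be pinned down), i.e.\ the mean fluctuates and its second moment \emph{grows} by a term proportional to $f(\eta_t)$. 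Combining the two, $\E[f(\eta_{t+1})\mid\eta_t] = f(\eta_t) - \bigl(\tfrac{2(1-2p)}{n-1} - \tfrac{c(4p-1)}{n(n-1)}\bigr) f(\eta_t)$ for the appropriate constant $c$, which is exactly $(1 - \lambda_{\rm SEM})f(\eta_t)$ with $\lambda_{\rm SEM}$ as in \eqref{eq:2gaps}. Iterating and using $1-\lambda \le e^{-\lambda}$ completes the argument; note $\eta_\infty \equiv Y$ is constant so $W_2^2(\eta_0,t) \ge \E[\|\eta_t - \langle\eta_t\rangle\|_2^2]$-type comparisons are not even needed here — the lemma is stated directly for the centered functional.

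The main obstacle is the careful bookkeeping of the \emph{non-conserved mean}: unlike SRM/GAM, where $\langle\eta_t\rangle$ is deterministic and the analysis reduces to a one-line spectral-gap computation for $\sum_x\eta(x)^2$, here the term $n\langle\eta_t\rangle^2$ in $f$ is itself a fluctuating quantity whose conditional second moment increases, partially offsetting the contraction of $\sum_x\eta(x)^2$. Getting the precise cross-cancellations right — in particular recognizing that both corrections are proportional to the \emph{same} quantity $\sum_{x\neq y}(\eta_t(x)-\eta_t(y))^2 = 2n f(\eta_t)$, so that they combine into a clean multiplicative contraction rather than merely an additive error — is the crux, and is what produces the somewhat unusual form of $\lambda_{\rm SEM}$ with its $\frac{4\E[X^2]-1}{n(1-2\E[X^2])}$ correction factor. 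I would double-check the constants by specializing to $X\equiv\tfrac12$ (where $p=\tfrac14$, the correction factor vanishes, and $\lambda_{\rm SEM} = \frac{1}{n-1}$) and cross-checking against the GAM rate, which shares the leading structure.
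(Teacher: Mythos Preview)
Your approach is correct and essentially identical to the paper's: both compute the one-step evolution of $\|\eta\|_2^2$ and of $\langle\eta\rangle^2$ separately, then combine to obtain the exact identity $\E[\|\eta_{t+1}-\langle\eta_{t+1}\rangle\|_2^2\mid\eta_t]=(1-\lambda_{\rm SEM})\|\eta_t-\langle\eta_t\rangle\|_2^2$ and iterate. The only cosmetic differences are that the paper shortcuts the first computation by noting $\E[\|vR_1^\top\|_2^2]=\E[\|vQ_1\|_2^2]$ (reusing the GAM identity), and that your sketch has a sign slip in the combination---the mean-fluctuation term enters $\lambda_{\rm SEM}$ with a $+$ sign, so when $4\E[X^2]>1$ it \emph{enhances} rather than offsets the contraction; your flagged ``constant to be pinned down'' would catch this.
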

	We remark that all rates $\l$ appearing in Lemma \ref{lemma:AL} and Lemma \ref{lemma:AL_HMP} have the same asymptotic behavior
	 	\begin{equation}\label{eq:limit-gap}
		\lambda\sim \frac2n \left(1-2\E[X^2]\right)\,.
	\end{equation}
When $X\equiv1/2$, all three models reduce to the standard averaging process, in which case $\l=1/(n-1)$ and the above estimates are well known; see, e.g., \cite{aldous_lecture_2012,chatterjee2020phase}.

\section{Preliminary facts and main tools}\label{sec:prelim}

\subsection{Coupling, duality and stationary measures}
We may couple the three models in such a way that the updated pairs $(x,y)$ and the redistribution variables $X$ coincide at each step.  
Recalling the representation \eqref{eq:repre}-\eqref{eq:repre1}, and writing $\eta,\xi,\omega$ respectively for the SRM, SEM, and GAM configurations, we have
\begin{equation}\label{eq:repres}
	\eta_t = \eta_0 R_1\cdots R_t\comma\qquad \xi_t =  \xi_0 R^{\top}_1\cdots R^{\top}_t\comma\qquad 
	 \omega_t =\omega_0 Q_1\cdots Q_t\comma
\end{equation}
where $R^{\top}_i$ denotes the transpose of $R_i$, and $R_i$ and $Q_i$ denote the $n\times n$ matrix given by the identity except for the $2\times 2$ block corresponding to the $(x,y)$ pair, which takes the form $r$, $q$ respectively; see \eqref{eq:repre1}. The representation \eqref{eq:repres} immediately implies the following distributional identity, valid for any initial $\xi_0,\eta_0\in\D$,
\begin{equation}\label{eq:dual}
	\scalar{\eta_0}{\xi_t}\overset{d}{=}\scalar{\eta_t}{\xi_0} ,
\end{equation}
where $\scalar{u}{v}=\sum_xu(x)v(x)$ denotes the usual scalar product. This is an instance of the duality relations between SRM and SEM recently discussed in \cite{de_masi_ferrari_gabrielli_hidden_2023,giardina_redig_tol_intertwining_2024,kim2025spectral}.
We turn to a discussion of the stationary distributions and the proof of Proposition \ref{prop:uniqueness} and Lemma \ref{lem:gamma}. 

\subsubsection{Stationary distribution for SRM}
We have $\eta_t\in\D$, for all $t\ge 0$. Thus,  existence of a stationary distribution follows from a standard compactness argument. To prove uniqueness, we use the following contraction property. 
\begin{lemma}\label{lem:contra1}
	For any vector $v\in\R^n$, 
	\begin{equation}
		\E\quadre{\|vR_1\|_2^2}=	(1-\lambda_{\rm SRM})\|v\|_2^2+\frac{4\E[X^2]n}{n-1}\langle v\rangle^2\,, 
	\end{equation} 
	where $\lambda_{\rm SRM}$ is given in \eqref{eq:2gap}. 
\end{lemma}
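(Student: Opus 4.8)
The plan is to compute $\E[\|vR_1\|_2^2]$ directly by averaging over the random update. Recall that $R_1$ acts as the identity outside the $2\times 2$ block at the uniformly chosen ordered pair $(x,y)$, where it has the form $r$ in \eqref{eq:repre1}. So $vR_1$ differs from $v$ only in coordinates $x$ and $y$, where $(v(x),v(y))$ is replaced by $(Xv(x)+Xv(y),(1-X)v(x)+(1-X)v(y)) = (X s, (1-X)s)$ with $s := v(x)+v(y)$. Hence
\begin{equation}
\|vR_1\|_2^2 = \|v\|_2^2 - v(x)^2 - v(y)^2 + (X^2 + (1-X)^2)s^2\,.
\end{equation}

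Next I would take the expectation. Since $X$ is independent of the pair, $\E[X^2+(1-X)^2] = 2\E[X^2]$ (using symmetry, $\E[(1-X)^2]=\E[X^2]$). Averaging over the uniformly chosen ordered pair $(x,y)$ of distinct particles (there are $n(n-1)$ of them),
\begin{equation}
\E\big[v(x)^2+v(y)^2\big] = \frac{2}{n}\sum_x v(x)^2 = \frac{2}{n}\|v\|_2^2\,,
\end{equation}
and
\begin{equation}
\E\big[s^2\big] = \E\big[(v(x)+v(y))^2\big] = \frac{1}{n(n-1)}\sum_{x\neq y}\big(v(x)+v(y)\big)^2\,.
\end{equation}
Expanding the last sum gives $\sum_{x\neq y}(v(x)^2+v(y)^2+2v(x)v(y)) = 2(n-1)\|v\|_2^2 + 2\big((\sum_x v(x))^2 - \|v\|_2^2\big)$, so $\E[s^2] = \frac{2(n-2)}{n(n-1)}\|v\|_2^2 + \frac{2}{n(n-1)}(\sum_x v(x))^2$. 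Writing $\sum_x v(x) = n\langle v\rangle$ and collecting terms,
\begin{equation}
\E\big[\|vR_1\|_2^2\big] = \Big(1 - \tfrac{2}{n} + 2\E[X^2]\cdot\tfrac{2(n-2)}{n(n-1)}\Big)\|v\|_2^2 + 2\E[X^2]\cdot\tfrac{2n}{n-1}\langle v\rangle^2\,.
\end{equation}

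Finally I would check that the coefficient of $\|v\|_2^2$ equals $1-\lambda_{\rm SRM}$ with $\lambda_{\rm SRM} = \frac{2}{n}\big(1-2\E[X^2]\frac{n-2}{n-1}\big)$ as in \eqref{eq:2gap}: indeed $\frac{2}{n} - \frac{4\E[X^2](n-2)}{n(n-1)} = \frac{2}{n}\big(1 - \frac{2\E[X^2](n-2)}{n-1}\big) = \lambda_{\rm SRM}$, and the $\langle v\rangle^2$ coefficient is exactly $\frac{4\E[X^2]n}{n-1}$, matching the claim. This is an entirely routine second-moment computation; the only mild subtlety is bookkeeping the diagonal correction when passing from $\sum_{x\neq y}$ to $(\sum_x v(x))^2$, and remembering that the pair is ordered so the normalization is $n(n-1)$ rather than $\binom{n}{2}$ — though since the summand $(v(x)+v(y))^2$ is symmetric this does not actually matter. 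No real obstacle is expected here.
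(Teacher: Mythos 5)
Your proof is correct and follows essentially the same computation as the paper: decompose $\|vR_1\|_2^2-\|v\|_2^2$ for a fixed update, use $\E[X^2+(1-X)^2]=2\E[X^2]$, average over the uniformly chosen pair, and collect coefficients of $\|v\|_2^2$ and $\langle v\rangle^2$. The parametrization via $s=v(x)+v(y)$ and the use of ordered rather than unordered pairs are only cosmetic differences, as you correctly note.
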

\begin{proof}
By definition of the random matrix  $R_1$,
\begin{align*}
\E\quadre{\|vR_1\|_2^2}&=	\|v\|_2^2 +\frac1{\binom{n}{2}}\sum_{x<y}\left[(2\E[X^2]-1)(v(x)^2+v(y)^2) + 4\E[X^2]v(x)v(y)\right]
\\
& = \|v\|_2^2 \Big(1+\frac2n(2\E[X^2]-1) - \frac{2\E[X^2]}{\binom{n}{2}}\Big) + \frac{4n\E[X^2]}{n-1}\langle v\rangle^2\,.
\end{align*}
Rearranging terms yields the claimed identity. 
\end{proof}
Using linearity, and the fact $ \langle\eta_t-\eta_t' \rangle = 0$ for all $t\ge 0$, iteration of Lemma \ref{lem:contra1} shows that, for any two initial configurations $\eta_0,\eta_0'\in\D$, the evolutions $\eta_t,\eta_t'$ can be coupled in such a way that 
\begin{equation}\label{eq:kmpco}
\E\quadre{\|\eta_t-\eta_t'\|_2^2}	=(1-\lambda_{\rm SRM})^t\|\eta_0-\eta_0'\|_2^2\,, \qquad t\geq 0\,.
\end{equation}
Since $ \lambda_{\rm SRM}\ge \frac{2}{n(n-1)}>0$,   $\E\quadre{\|\eta_t-\eta_t'\|_2^2}
\to 0$ as $t\to\infty$ uniformly in the initial conditions $\eta_0,\eta_0'\in\D$. It follows that there exists a unique stationary distribution $\r$ on $\D$ for SRM, and for any initial distribution $\nu$ on $\D$, if $\nu_t$ denotes the law at time $t$, then $\nu_t$ converges weakly to $\r$ as $t\to\infty$.

\subsubsection{Stationary distribution for GAM} 
The analogue of Lemma \ref{lem:contra1} here is as follows. 
\begin{lemma}\label{lem:contra2}
	For any vector $v\in\R^n$, 
	\begin{equation}
		\E\quadre{\|vQ_1\|_2^2}=	(1-\lambda_{\rm GAM})\|v\|_2^2+n\lambda_{\rm GAM}  \langle v\rangle^2\,, %\quad 
			\end{equation} 
			where 
		$\lambda_{\rm GAM}$ is given in \eqref{eq:2gap}.
\end{lemma}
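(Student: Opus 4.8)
The proof should run in complete parallel with that of Lemma~\ref{lem:contra1}: the only change is the $2\times 2$ block, and here the relevant matrix $q$ from \eqref{eq:repre1} is symmetric, so that for a fixed pair $(x,y)$ the map $v\mapsto vQ_1$ leaves all coordinates unchanged except $v(x)\mapsto Xv(x)+(1-X)v(y)$ and $v(y)\mapsto (1-X)v(x)+Xv(y)$. The plan is therefore: (i) reduce $\E[\|vQ_1\|_2^2]-\|v\|_2^2$ to a two-variable computation on the selected block; (ii) average over the redistribution variable $X$; (iii) average over the uniformly chosen pair and recognize the mean-field Laplacian quadratic form.

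For step (i), write $a=v(x)$, $b=v(y)$. Expanding and using $X^2+(1-X)^2-1=-2X(1-X)$ gives
\begin{align*}
(Xa+(1-X)b)^2+((1-X)a+Xb)^2-a^2-b^2
&= \big(X^2+(1-X)^2-1\big)(a^2+b^2)+4X(1-X)ab\\
&=-2X(1-X)(a-b)^2 ,
\end{align*}
so that, conditionally on the pair $(x,y)$ chosen at the first step, $\|vQ_1\|_2^2-\|v\|_2^2=-2X(1-X)\,(v(x)-v(y))^2$. For step (ii), the symmetry assumption ${\rm Law}(X)={\rm Law}(1-X)$ yields $\E[X]=\tfrac12$, hence $2\E[X(1-X)]=1-2\E[X^2]$. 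For step (iii), the pair is uniform among the $\binom n2$ unordered pairs, and the elementary identity $\sum_{x<y}(v(x)-v(y))^2=n\|v\|_2^2-\big(\sum_x v(x)\big)^2=n(\|v\|_2^2-n\langle v\rangle^2)$, together with $n/\binom n2=2/(n-1)$, gives
\[
\E\big[\|vQ_1\|_2^2\big]=\|v\|_2^2-(1-2\E[X^2])\,\frac{2}{n-1}\,\big(\|v\|_2^2-n\langle v\rangle^2\big)=(1-\lambda_{\rm GAM})\|v\|_2^2+n\lambda_{\rm GAM}\langle v\rangle^2 ,
\]
which is the claimed identity, with $\lambda_{\rm GAM}$ as in \eqref{eq:2gap}.

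I do not expect a genuine obstacle: the computation is elementary, and the only points deserving a moment's attention are the ordered-versus-unordered bookkeeping of the selected pair and the sign, together with spotting the clean factorization $-2X(1-X)(v(x)-v(y))^2$ that makes the pair-average collapse. As in the SRM case, this identity is the engine behind the $W_2$ contraction \eqref{eq:2contraction} for GAM: applying it along a coupling of two copies started from $\eta_0,\eta_0'\in\D$ to the difference $v=\eta_t-\eta_t'$, which has zero mean $\langle v\rangle=0$ for all $t$ (since $Q_1$ is doubly stochastic), one obtains $\E[\|\eta_t-\eta_t'\|_2^2]=(1-\lambda_{\rm GAM})^t\|\eta_0-\eta_0'\|_2^2$, hence convergence to the degenerate equilibrium of Proposition~\ref{prop:uniqueness} with rate $\lambda_{\rm GAM}$.
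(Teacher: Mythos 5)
Your proof is correct and follows essentially the same route as the paper's: expand $\E\quadre{\|vQ_1\|_2^2}-\|v\|_2^2$ over the selected block, use the symmetry of $X$ to get $2\E[X(1-X)]=1-2\E[X^2]$, and average over the uniformly chosen pair. Your intermediate factorization $-2X(1-X)(v(x)-v(y))^2$ and the identity $\sum_{x<y}(v(x)-v(y))^2=n(\|v\|_2^2-n\langle v\rangle^2)$ are a tidier packaging of the same algebra the paper carries out by grouping the $(2\E[X^2]-1)(v(x)^2+v(y)^2)$ and $4\E[X(1-X)]v(x)v(y)$ terms directly.
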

\begin{proof}
By definition of the random matrix  $Q_1$,
\begin{align*}
\E\quadre{\|vQ_1\|_2^2}&=	\|v\|_2^2 +\frac1{\binom{n}{2}}\sum_{x<y}\left[(2\E[X^2]-1)(v(x)^2+v(y)^2) + 4\E[X(1-X)]v(x)v(y)\right]
\\
& = \|v\|_2^2 \Big(1+\frac2n(2\E[X^2]-1) - \frac{2\E[X(1-X)]}{\binom{n}{2}}\Big) + \frac{4n\E[X(1-X)]}{n-1}\langle v\rangle^2\,.
\end{align*}
Using $2\E[X(1-X)]=1-2\E[X^2]$, and rearranging, one has the desired identity. 
\end{proof}
We have $\omega_t\in\D$, for all $t\ge 0$. Moreover, it is immediate to check that the Dirac mass at the constant vector $\bar u(x)\equiv \frac1n$ is a stationary distribution for the GAM. To prove uniqueness and convergence observe that by taking $v=\omega_0-\bar u$ in Lemma \ref{lem:contra2} and iterating one obtains
 \begin{equation}\label{eq:kmpco2}
\E\quadre{\|\omega_t-\bar u\|_2^2}	=(1-\lambda_{\rm GAM})^t\|\omega_0-\bar u\|_2^2\,, \qquad t\geq 0\,.
\end{equation}
Since $\lambda_{\rm GAM}>0$ by the non-degeneracy assumption on $X$, this proves that for any $\omega_0\in\D$, the law of $\omega_t$ converges weakly to the Dirac mass at $\bar u$. In particular, it establishes uniqueness of the stationary distribution. 

\subsubsection{Proof of Proposition \ref{prop:uniqueness}}
The above discussion proves the statements concerning the SRM and GAM. In order to prove the statements about SEM, first notice that
by the maximum principle \eqref{eq:max} we may restrict the state space to $\xi\in\R_+^n$ such that $\max_{x}\xi_{0}(x)\le 1$. 
Then, by compactness, there exists stationary distributions for the SEM. In contrast with the other two processes, here we do not have uniqueness. This can be seen already from \eqref{eq:dual}: if e.g.\ $\eta_0(x)\equiv 1/n$, then the law of $ \langle \xi_t\rangle = \scalar{\eta_0}{\xi_t}$
converges to the law of the random variable $Y:=\scalar{\eta_\infty}{\xi_0}$, which depends on the initial condition $\xi_0$.  Here $\eta_\infty$ stands for the random variable with distribution $\r$, where $\r$ is the unique stationary distribution for the SRM. It remains to show that $\xi_t$ is asymptotically flat. 
To see this,  we note that by symmetry  of $X$, as in Lemma \ref{lem:contra2} 
one has, for any vector $v \in\R^n$,
\begin{align*}
\E\quadre{\|vR_1^\top\|_2^2}=\E\quadre{\|vQ_1\|_2^2}=
(1-\lambda_{\rm GAM})\|v\|_2^2+n\lambda_{\rm GAM}  \langle v\rangle^2
\end{align*}
Therefore, for any $\xi\in\R_+^n$, writing $\xi'=\xi R_1^\top$,
\begin{align*}
\E\quadre{\|\xi' - \langle \xi' \rangle \|_2^2}&=\E\quadre{\|\xi'\|_2^2 }- n\E\quadre{\langle \xi'\rangle^2}\\
 &  = 
(1-\lambda_{\rm GAM})\|\xi\|_2^2+n\lambda_{\rm GAM}  \langle \xi\rangle^2- n\E\quadre{\langle \xi'\rangle^2}
\,.
\end{align*}
A similar computation  shows that 
\begin{align}\label{eq:sipo}
\E\quadre{\langle \xi'\rangle^2}
= \left(1-\frac{2(4\E[X^2]-1)}{n(n-1)}\right)\langle \xi\rangle^2 +\frac{2(4\E[X^2]-1)}{n^2(n-1)}\,\|\xi\|_2^2\,.
\end{align}
In conclusion,
\begin{align}\label{eq:SEMt}
\E\quadre{\|\xi' - \langle \xi' \rangle \|_2^2}&=\left(1-\lambda_{\rm GAM}-\frac{2(4\E[X^2]-1)}{n(n-1)}\right)
\left(\|\xi\|_2^2 - n\langle \xi\rangle^2\right)\nonumber\\
&=(1-\lambda_{\rm SEM})\|\xi - \langle \xi \rangle \|_2^2\,,
\end{align}
where $\lambda_{\rm SEM} = \lambda_{\rm GAM}+\frac{2(4\E[X^2]-1)}{n(n-1)}$ as in \eqref{eq:2gaps}.
Iterating the last identity proves the exponential decay stated in Lemma \ref{lemma:AL_HMP}. In particular, for any $\xi_0\in\D$, $\xi_t$ converges weakly as $t\to\infty$ to the constant configuration with random height $Y=\scalar{\eta_\infty}{\xi_0}$. This concludes the proof of Proposition \ref{prop:uniqueness}.

\subsubsection{Proof of Lemma \ref{lem:gamma}}
In the special case where $X\sim{\rm Beta}(\alpha,\alpha)$ for some $\a>0$, the unique stationary distribution $\r$ of SRM is the  Dirichlet distribution ${\rm Dir}(\a)$ obtained as the product of i.i.d.\  ${\rm Gamma}(\alpha,1)$ random variables conditioned to have total sum equal to $1$. Indeed, it is easily checked that in this case an update at $(x,y)$ coincides with the ``heat bath'' update with respect to ${\rm Dir}(\a)$, obtained by replacing the pair $(\eta_x,\eta_y)$ by a random pair $(\eta'_x,\eta'_y)$ distributed according to the conditional distribution ${\rm Dir}(\a)$ given the configuration $(\eta_z, z\neq x,y)$. This also shows that the SRM is a reversible Markov chain. For a partial converse implication, see, e.g., \cite{grigo_mixing_2012}.

We turn to the statement concerning the SEM. The marginal of ${\rm Dir}(\a)$ on a single component is given by $Y_\a:={\rm Beta}(\alpha,\alpha(n-1))$, and thus the duality \eqref{eq:dual} implies that $\langle \xi_t \rangle$ converges weakly to $Y_\a$ when the initial configuration $\xi_0=\d_{x_0}$ is a Dirac mass at  $x_0$. Together with the arguments in the proof of  Proposition \ref{prop:uniqueness} this ends the proof. 

\subsubsection{Proof of Lemma \ref{lemma:AL} and Lemma \ref{lemma:AL_HMP}}\label{sec:proof-L2-contraction}
 Let $\eta_\infty$ denote the random variable with law $\r$, where $\r$ is the unique stationary distribution of SRM. Then writing $R(t)=R_1\cdots R_t$, we have that $(\eta_0R(t),\eta_\infty R(t))$ is a coupling of $(\eta_t,\eta_\infty)$. Therefore, 
\begin{align}\label{W2-con}
	W^2_2(\eta_0,t)\le	
	\E\big[\| \eta_0R(t) - \eta_\infty R(t)\|^2_2\big]\,.
\end{align}
By using \eqref{eq:kmpco}, we obtain, for any fixed $\eta_0\in\D$,  
\[W^2_2(\eta_0,t)\le e^{-\l_{\rm SRM}t}\E\big[\| \eta_0 - \eta_\infty \|^2_2 = e^{-\l_{\rm SRM}t} W^2_2(\eta_0,0)\,.
\]
The same argument works for GAM, provided we use \eqref{eq:kmpco2} instead of  \eqref{eq:kmpco}. This ends the proof of Lemma \ref{lemma:AL}.
Finally, we observe that Lemma \ref{lemma:AL_HMP} is obtained by iteration of \eqref{eq:SEMt}.

\subsubsection{Proof of Lemma \ref{lemma:monotone}}
To prove monotonicity of the $W_1$ distance in the SRM model we argue as follows. Let $u,v\in \D$ denote two initial configurations, and write $u',v'$ for their evolution after one step given the pair $(x,y)$ to be updated and the redistribution variable $X$. Then 
\begin{align*}
&\|u'-v'\|_1 -\|u-v\|_1  = |X(u(x)+u(y))- X(v(x)+v(y))| +\\
&\; +  |(1-X)(u(x)+u(y))- (1-X)(v(x)+v(y))| - |u(x)-v(x)|- |u(y)-v(y)| \le 0\,.
\end{align*}
Take $u=\eta_t,v=\eta_\infty$ and let them be coupled by the optimal coupling for the $L^1$ distance $W_1$ at time $t$. Clearly,  $\eta_tR_{t+1}, \eta_\infty R_{t+1}$ provides a coupling for $\eta_{t+1},\eta_\infty$, and the above inequality ensures that  $ \|\eta_tR_{t+1}-\eta_\infty R_{t+1}\|_1\le \|\eta_{t}-\eta_\infty\|_1$. Therefore, $W_1(\eta_0,t+1)\le  W_1(\eta_0,t)$ for all $\eta_0\in\D$. The same argument with minor modifications works also for SEM and GAM. Moreover, the same argument can be used to prove the monotonicity for the weaker distance $\cW_1(t)$. The only difference is that the above computation has to be performed by using the pair $(\sigma(x),\sigma(y))$ for the variable $v'$ when the pair $(x,y)$ is chosen for the variable $u'$, where  $\sigma\in \mathfrak S_n$ is the optimal permutation in $\inf_{\sigma\in\mathfrak{S}_n} \|u-v\circ \sigma\|_1$. The mean-field nature of the process allows one for such a modification and the monotonicity is preserved.

\subsection{Proof of Corollary \ref{coro:CLT-pile}}
			By our choice of $t$ and the fact that $T\sim{\rm Bin}(t,\frac2n)$,  as $n\to\infty$ one has
		\begin{equation}
		\label{eq:poire}
			\var(T)\sim \E[T]\sim \frac{\log n}{ \mathfrak{h}}\,.
		\end{equation}
		Let us fix some $\alpha\in\R$ and define 
		\begin{equation}
				m:=m(\alpha)=\lfloor \E[T]+\alpha\sqrt{\var T} \rfloor\,.
		\end{equation}
		Then, 
			\begin{align}
			m&=\frac2n t+\alpha\sqrt{\frac2n t}+O(1)\\
			&=\frac{\log n}{\mathfrak h}+\beta\left(1+\mathfrak r\right)\sqrt{\frac{\log n}{\mathfrak h}}+\alpha\sqrt{\frac{\log n}{\mathfrak h}}+o\left(\sqrt{\log n}\right)\,.
		\end{align}	 
		Therefore, using $\theta=n^{-1}e^\psi$, \begin{equation}
			\P\left(\sum_{s=1}^m\log(\hat X_s)\ge\log \theta\right)=\P\left(\frac1{\sqrt m}\sum_{s=1}^m \frac{\log(\hat X_s^{-1})-\mathfrak h}{\mathfrak s}\le \frac{\log n -\psi -m \mathfrak h}{\mathfrak s \sqrt{m}}\right)\,.
		\end{equation}
		Notice that
		\begin{equation}
			\log n -\psi -m \mathfrak h=\sqrt{\mathfrak h\log n}\big(-\alpha-\gamma-\beta(1+\mathfrak r) \big)+o(\sqrt{\log n})\,.
		\end{equation}
		Hence, 
			\begin{equation}
			\frac{\log n -\psi -m \mathfrak h}{\mathfrak s \sqrt m}=\frac{1}{\mathfrak r}\big(-\alpha-\gamma-\beta(1+\mathfrak r) \big)+o(1)\,.
		\end{equation}
	Taking the limit $n\to\infty$, and using the CLT, 
		\begin{equation}
				\lim_{n\to\infty}\P\left(\sum_{s=1}^m\log(\hat X_s)\ge\log \theta\right)=\Phi\left(\frac{1}{\mathfrak r}\big(-\alpha-\gamma-\beta(1+\mathfrak r) \big) \right)\,.
		\end{equation}
		By Proposition \ref{lemma:size-pile},
		\begin{equation}
			\begin{split}
			\E[\|\eta^\theta_t\|_1]&=\sum_{r=0}^{t}\P(T=r)	\P\left(\sum_{s=1}^r\log(\hat X_s)\ge\log \theta\right)\\
			&=\int_{\R}\P(T=m(\alpha))\sqrt{\var(T)}\,\P\left(\sum_{s=1}^{m(\alpha)}\log(\hat X_s)\ge\log \theta\right){\rm d}\alpha\,.
			\end{split}
		\end{equation}
	The local CLT for binomial random variables  implies, for all $\alpha \in \R$,
		\begin{equation}\label{eq:lclt}
			\lim_{n\to\infty}\left|\P(T=m(\alpha))\sqrt{\var(T)}-\varphi(\alpha)\right|=0\,,
		\end{equation}
		where $\varphi$ is the density of a standard Gaussian. 
			Letting 
			\begin{equation}
				\varphi_n(\alpha)=\P(T=m(\alpha))\sqrt{\var(T)}\,,\qquad f_n(\alpha)=\P\left(\sum_{s=1}^{m(\alpha)}\log(\hat X_s^{-1})<\log n-\psi \right)\,,
			\end{equation}
we rewrite
		\begin{equation}
			\E[\|\eta^\theta_t\|_1]
			=\int_{\R}\varphi_n(\alpha)f_n(\alpha)\,{\rm d}\alpha\, ,
	\end{equation}
	and claim that this expression converges, as $n\to \infty$, to
	\begin{equation}
		\int_\R \varphi(\alpha)\, f(\alpha)\, \dd \alpha = \Phi\left(-\frac{\beta(1+\mathfrak{r})+\gamma}{\sqrt{1+\mathfrak{r}^2}}\right) \comma\qquad \text{with}\ f(\alpha)= \Phi\bigg(\frac{1}{\mathfrak r}\big(-\alpha-\gamma-\beta(1+\mathfrak r) \big) \bigg) \fstop
	\end{equation}
	Indeed, we have
	\begin{align}
		\left|\int \varphi_n\, f_n - \int \varphi\, f \right|&\le \int \abs{\varphi_n-\varphi}f_n + \int \varphi\abs{f_n-f}\le \int \abs{\varphi_n-\varphi} + \int \varphi \abs{f_n-f}\comma
	\end{align}
	where the last step used that $f_n\le 1$.
		Thanks to \eqref{eq:lclt} and   $\int\varphi_n= \int\varphi$, the first integral on the right-hand side vanishes as $n\to \infty$ by Scheff\'{e}'s lemma. Since, by the CLT, we have $\lim_{n\to \infty}f_n=f$ pointwise,  the second integral goes to $n\to \infty$ by the dominated convergence theorem. This concludes the proof of the corollary.
	
	\section{Proof of Theorem \ref{theorem}}\label{sec:proofs}
	\subsection{The lower bound}
We prove the lower bound for $\cW_1(t)$, since  $W_1(t)\ge \cW_1(t)$ for all $t\ge 0$.
We give a unified argument for all three models, and denote $\eta_t$ the configuration at time $t$.  
Let $(\eta_t,\eta_\infty)$ denote an arbitrary coupling of the random variables $\eta_t$ and $\eta_\infty$ corresponding to the initial condition $\eta_0 = \d_{x_0}$ given by a Dirac mass. 
We fix $\g>0$ and $\b\in \R$, set $\psi =\gamma \sqrt{\mathfrak h\log n}$ and $t=t_{\rm ent}+\beta t_{\rm w}$, and omit the parameter $\g>0$ from our notation. Henceforth, we simply write $\eta_t^+=\eta_t^{+,\gamma}$  (see \eqref{eq:def-eta+g}), and define $\eta_t^-=\eta_t-\eta_t^+$.

We are going to show that
\begin{equation}\label{eq:eta+a2}
\liminf_{n\to\infty} 
\left(\cW_1(t)-   2\E\left[\|\eta^{+}_t\|_1\right]\right)\ge0\,.
\end{equation}
As discussed in \eqref{eq:eta+2}, this is sufficient to prove the desired lower bound.  
To prove \eqref{eq:eta+a2}, we write, for all $\sigma \in \mathfrak S_n$,
\begin{align*}
	&\|\eta_t-\eta_\infty\circ\sigma\|_1 = \textstyle\sum_{x} |\eta_t(x)-\eta_\infty(\sigma(x))|\\
	&=\textstyle\sum_{x}\abs{\eta_t(x)-\eta_\infty(\sigma(x))}\car_{\eta_t^+(x)> 0} + \textstyle\sum_{x}\abs{\eta_\infty(\sigma(x))-\eta_t^-(x)}\car_{\eta_t^+(x)=0}\\
	&\ge \|\eta_t^+\|_1
	-\textstyle\sum_{x} \eta_\infty(\sigma(x))\car_{\eta_t^+(x)>0} + \textstyle\sum_{x}\eta_\infty(\sigma(x))\car_{\eta_t^+(x)=0}-\sum_{x\in V}\eta_t^-(x)\car_{\eta_t^+(x)=0}\\
	&\ge \|\eta_t^+\|_1 +\|\eta_\infty\|_1-\|\eta_t^-\|_1-2\textstyle\sum_{x}\eta_\infty(\sigma(x))\car_{\eta_t^+(x)>0}\\
	&= 2\|\eta_t^+\|_1+\|\eta_\infty\|_1-\|\eta_t\|_1 - 2\textstyle\sum_{x}\eta_\infty(\sigma(x))\car_{\eta_t^+(x)>0}\,.
\end{align*}
Next, we estimate 
\begin{align*}
\textstyle\sum_{x}\eta_\infty(\sigma(x))\car_{\eta_t^+(x)>0}\le \|\eta_\infty\|_2\sqrt{\abs{\set{x:\,\eta_t^+(x)>0}}}\le \|\eta_\infty\|_2 \sqrt{\frac{n}{e^\psi}\|\eta_t\|_1}\,.
\end{align*}
We obtained a lower bound independent of  $\sigma\in \mathfrak S_n$. Hence, taking the expectation on both sides and applying the Cauchy-Schwarz inequality,
\begin{align}
	\cW_1(t)&\ge 2 \E[\|\eta_t^+\|_1]+\E[\|\eta_\infty\|_1]-\E[\|\eta_t\|_1]-2\sqrt{ne^{-\psi}\E\quadre{\|\eta_\infty\|_2^2} \E[\|\eta_t\|_1]}\\
	&= 2 \E[\|\eta_t^+\|_1]-2\sqrt{ne^{-\psi}\E\quadre{\|\eta_\infty\|_2^2}} \comma
\end{align}
where we used $\E[\|\eta_t\|_1]=\E[\|\eta_\infty\|_1]=1$. Since $e^{-\psi}\to0$, the conclusion \eqref{eq:eta+a2} follows once we  prove
\begin{equation}\label{eq:L2-norm-eq}
	\sup_{n\in \N} \,n\,\E\quadre{\|\eta_\infty\|_2^2}<\infty\fstop
\end{equation} 
We remark that up to now everything applies equally well to all three models. Moreover, for the GAM we know that $\eta_\infty\equiv 1/n$ and thus \eqref{eq:L2-norm-eq} holds trivially, since $n\|\eta_\infty\|_2^2=1$ in this case. For the SRM we observe that by iterating Lemma \ref{lem:contra1} one finds, for all $t\ge 1$,
\begin{equation}
		\E[\|\eta_t\|_2^2] = \tonde{1-\lambda_{\rm SRM}}^t \|\eta_0\|_2^2 + \frac{4\E[X^2]}{n(n-1)}\sum_{s=0}^{t-1}\tonde{1-\lambda_{\rm SRM}}^s  \,.
	\end{equation} In particular, 
	\begin{equation}\label{eq:UB-kmp-1}
		n\,\E[\|\eta_\infty\|_2^2] = \frac{4\E[X^2]}{\lambda_{\rm SRM}\tonde{n-1}} 
		\sim \frac{2\E[X^2]}{1-2\E[X^2]}\comma
	\end{equation}
	and the right-hand side above is finite by the non-degeneracy assumption on $X$.
Finally, for the SEM, we know that $\eta_\infty$ is a.s.\ constant and thus $\E[\|\eta_\infty\|_2^2]= n \E[\left \langle \eta_\infty\right\rangle^2]$. By the duality \eqref{eq:dual}, it follows that $\E[\left \langle \eta_\infty\right\rangle^2] = \frac1n \E[\|\eta^{\rm SRM}_\infty\|_2^2]$ where $\eta^{\rm SRM}_\infty$ denotes the stationary state of the SRM. In conclusion, $\E[\|\eta_\infty\|_2^2]= \E[\|\eta^{\rm SRM}_\infty\|_2^2]$, and the desired bound \eqref{eq:L2-norm-eq} for the SEM follows using again  \eqref{eq:UB-kmp-1}. 
This ends the proof of the lower bound. 
	\subsection{The upper bound}
	We prove the upper bound for the distance $W_1$. Since $W_1(t)\ge \cW_1(t)$, for all $t\ge 0$, this implies the result for $\cW_1$ as well. 	We now focus on proving the desired estimate \eqref{eq:eta+22} for the SRM first and then discuss the minor modifications in the argument for the other two models. For any coupling of $ (\eta_{t},\eta_{\infty})$ one has 
	\begin{align}\label{eq:three0}
			W_1(\eta_0,t)\le  \E\left[\left\|\eta_{t}-\eta_{\infty} \right\|_1 \right] \,.
	\end{align}
Using linearity we have $W_1(\eta_0,t)\le K(t)$ for any $t\ge 0$ and $\eta_0\in\D$, with $K(t)$ defined by  
	\begin{align}\label{eq:three01}
			K(t):=  \E\left[\left\|\eta_{t}-\eta_{\infty} \right\|_1 \right] \,, \qquad \eta_0=\d_{x_0}\,,
	\end{align}
	where $x_0$ is any fixed particle, and ($ \eta_{t},\eta_{\infty}$) are coupled through the coupling inherited from \eqref{eq:repres}. Thus the desired upper bound  will follow if we prove \eqref{eq:eta+22} with $W_1(t_\g)$ replaced by $K(t_\g)$.
Since now we are taking the initial configuration given by a Dirac mass, this will establish the sought upper bound on both $W_1(\d_{x_0},t)$ and  $W_1(t)$, thus concluding the proof of Theorem \ref{theorem}.
	
	Set $\gamma>0$, $\beta\in\R$, $\psi=\gamma\sqrt{\mathfrak h\log n}$ and $t=t_{\rm ent}+\beta t_{\rm w}$. Recall the definition \eqref{eq:def-eta+g} of $\eta_{t}^+=\eta_t^{+,\gamma}$, which represents the energies coming from large piles. We shall also need to control the energy coming from small piles $\eta_{t}^-=\eta_{t}-\eta_{t}^+$. To this end, we define
\begin{equation}\label{eq:def-eta-star}
		\eta_{t}^*(x)=\eta_{t}^-(x)\car_{n\eta_{t}^-(x)< e^{3\psi}}\,,\qquad \widehat \eta_t = \eta_{t}-\eta_{t}^{*}\,.
	\end{equation}
	By definition, one has the deterministic bound 
\begin{equation}\label{eq:tous}
\|\eta_{t}^*\|_2^2 \le n^{-1}e^{6\psi}=o(1)\,.
\end{equation}
We note that 
	\[ \widehat \eta_t(x)=\eta_{t}^+(x) + \eta_{t}^-(x)\car_{n\eta_{t}^-(x)\ge e^{3\psi}}\,.
	\]
Roughly speaking, $\widehat \eta_t$	consists of energies that are either coming from large piles or from a large number of small piles.

	Take $r=c\gamma n\sqrt{\log n}$ for a suitable constant $c>0$. We start at $\eta_0\in \D$, 
	and consider a fixed realization of $\eta_t=(\eta_{t}^{*},\widehat \eta_t)$. We use this as the initial configuration for a new evolution of $r$ steps of the SRM. Namely, letting $R_i$ denote the matrices in \eqref{eq:repres}, write $(\eta_{t,r}^*,\widehat\eta_{t,r})$ for the joint evolution obtained as
	\begin{equation}
\eta_{t,r}^*=\eta_{t}^*R_{t+1}\cdots R_{t+r}\,,\qquad \widehat\eta_{t,r}=\widehat\eta_{t}R_{t+1}\cdots R_{t+r}\,.
	\end{equation}
	We may couple these evolutions with $\eta_{t+r}$ in the canonical way, using 
	\begin{equation}\label{eq:GC}
\eta_{t}=\d_{x_0}R_{1}\cdots R_{t}\,,\qquad \eta_{t+r}=\eta_{t}R_{t+1}\cdots R_{t+r}\,.
	\end{equation}
Using \eqref{eq:three01}-\eqref{eq:GC}, 
	\begin{align}\label{eq:three}
K(t+r) & \le 
			\E\left[\left\|\eta_{t+r}-\eta_{t,r}^*\right\|_1\right] + \E\left[\left\|\eta_{t,r}^*-\eta_{t,\infty}^*\right\|_1\right]+ \E\left[\left\|\eta_{t,\infty}^*-\eta_\infty\right\|_1\right]
	\end{align}
Since  $\eta_{t+r}=\eta_{t,r}^*+\widehat\eta_{t,r}$ and, thus,
$\eta_{t+r}(x)\ge \eta_{t,r}^*(x)$, 
	\begin{align}
		\E\left[\left\|\eta_{t+r}-\eta_{t,r}^*\right\|_1\right] 
		& = \E\left[\left\|\eta_{t+r} \right\|_1 \right]- \E\left[\|\eta_{t,r}^*  \|_1 \right]
		\\
		& = \E\left[\left\|\eta_{t} \right\|_1 \right]- \E\left[\|\eta_{t}^*  \|_1 \right]=
		\E\left[\|\widehat\eta_t  \|_1 \right]\,.
	\label{eq:pezzo1}
	\end{align}
	The same argument shows that 
	\begin{equation}\label{eq:pezzo3}
	\E\left[\left\|\eta_{t,\infty}^*-\eta_\infty\right\|_1\right]
=\E\left[\|\widehat\eta_t  \|_1 \right]\,.
	\end{equation}
	Concerning the second term on the right-hand side of \eqref{eq:three}, using Cauchy-Schwarz and 
	 the $L^2$ contraction from the proof of Lemma \ref{lemma:AL} (Section \ref{sec:proof-L2-contraction}), one has 
	\begin{equation}\label{eq:l2bo}
		\E\left[\left\|\eta_{t,r}^*-\eta_{t,\infty}^*\right\|_1\right]
		\le \sqrt n \,e^{-\frac12\lambda \,r}\,\E\left[\left\|\eta_{t}^*-\eta_{t,\infty}^*\right\|^2_2\right]^{\frac12}\,,
	\end{equation}
	where $\l=\l_{\rm SRM}\sim c_1/n$ for some constant $c_1>0$, and we have used the $L^2$ contraction
	with initial condition $\eta_{t}^*$. From \eqref{eq:L2-norm-eq} and \eqref{eq:tous} it follows that 
\begin{equation}
\E\left[\left\|\eta_{t}^*-\eta_{t,\infty}^*\right\|^2_2\right]	
\le \E\big[\|\eta_{t}^*\|_2^2\big]+\E\big[\|\eta_{t,\infty}^*\|_2^2\big]\le  \E\big[\|\eta_{t}^*\|_2^2\big]+\E\big[\|\eta_{t,\infty}\|_2^2\big]\le n^{-1}(e^{6\psi}+O(1))\,.
\end{equation}
Since $r=c\gamma n\sqrt{\log n}$, if $c$ is larger than some constant independent of $\g$ and $n$, then $\l r \ge 8\psi$ and \eqref{eq:l2bo} yields
\begin{equation}\label{eq:pezzo2}
		\E\left[\left\|\eta_{t,r}^*-\eta_{t,\infty}^*\right\|_1\right]
		 = O(e^{-\psi})\,.
	\end{equation}
Summarizing, the desired result \eqref{eq:eta+22}  will follow from \eqref{eq:three}, \eqref{eq:pezzo1}, \eqref{eq:pezzo3}, \eqref{eq:pezzo2} and the following lemma, which we state and prove for all three models.  
	\begin{lemma}\label{lemma:eta*B}
	All three models satisfy
		\begin{equation}\label{eq:topro}
			\lim_{n\to\infty}\big(\E[\|\widehat\eta_{t}\|_1]-\E[\|\eta^+_{t}\|_1]\big)=0\,.
		\end{equation}
	\end{lemma}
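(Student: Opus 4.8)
The plan is to reduce the statement to an $L^2$ bound on the small-pile configuration $\eta^-_t$, and then to read that bound off the contraction estimates already proved. From the definitions in \eqref{eq:def-eta-star} one has the coordinatewise identity $\widehat\eta_t(x)=\eta^+_t(x)+\eta^-_t(x)\car_{n\eta^-_t(x)\ge e^{3\psi}}$, whence, applying Markov's inequality in each coordinate,
\[
\|\widehat\eta_t\|_1-\|\eta^+_t\|_1=\sum_x \eta^-_t(x)\,\car_{n\eta^-_t(x)\ge e^{3\psi}}\ \le\ \frac{n}{e^{3\psi}}\,\|\eta^-_t\|_2^2 .
\]
Since $e^{3\psi}=e^{3\gamma\sqrt{\mathfrak h\log n}}\to\infty$, and in fact $e^{\psi}\gg \log n$, it suffices to show $\E\big[\|\eta^-_t\|_2^2\big]=O(e^{\psi}/n)$, uniformly over the three models.

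To bound $\E\big[\|\eta^-_t\|_2^2\big]$ I would expand $\|\eta^-_t\|_2^2=\sum_x\eta^-_t(x)^2$ through the pile representation $\eta^-_t(x)=\sum_{\zeta\in A_t(x)}|\zeta|\,\car_{|\zeta|<\theta}$ with $\theta=\tfrac1n e^{\psi}$, and split the sum over pairs of piles at the same site into diagonal and off-diagonal contributions:
\[
\|\eta^-_t\|_2^2=\sum_{\zeta\in A_t}|\zeta|^2\car_{|\zeta|<\theta}\ +\ \sum_x\sum_{\zeta\ne\zeta'\in A_t(x)}|\zeta|\,|\zeta'|\,\car_{|\zeta|<\theta}\car_{|\zeta'|<\theta}.
\]
The diagonal term has expectation at most $\theta\,\E\big[\textstyle\sum_{\zeta\in A_t}|\zeta|\big]=\theta\,\E[\|\eta_t\|_1]=\theta=e^{\psi}/n$, using the size threshold and $\E[\|\eta_t\|_1]=1$ (Proposition \ref{lemma:size-pile}); for SRM and GAM the bound $e^\psi/n$ is even deterministic. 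The off-diagonal term is dominated, term by term (each indicator being $\le1$ and $(\sum a_i)^2\ge\sum a_i^2$), by the off-diagonal part of $\|\eta_t\|_2^2$, namely $\|\eta_t\|_2^2-\sum_{\zeta\in A_t}|\zeta|^2$. Hence
\[
\E\big[\|\eta^-_t\|_2^2\big]\ \le\ \frac{e^{\psi}}{n}\ +\ \Big(\E\big[\|\eta_t\|_2^2\big]-\E\big[\textstyle\sum_{\zeta\in A_t}|\zeta|^2\big]\Big),
\]
and the crux is that the two expectations in the bracket are individually of the large order $n^{-(1-2\E[X^2])/\mathfrak h}$ — the contribution of the few large piles not yet dispersed — but their difference is only $O(\log n/n)$, precisely because those large piles live on the diagonal.

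For the diagonal expectation I would use the combinatorial identity $\E[|A_{s,t}|]=2^s\P(T=s)$ from the proof of Proposition \ref{lemma:size-pile} together with the fact that, conditionally on $|A_{s,t}|$, each pile of $A_{s,t}$ is distributed as $\prod_{i=1}^s X_i$; this gives $\E[\sum_{\zeta\in A_t}|\zeta|^2]=\sum_{s}2^s\P(T=s)\,\E[X^2]^s=\big(1-\tfrac2n(1-2\E[X^2])\big)^t$, the same for all three models. For $\E[\|\eta_t\|_2^2]$ I would iterate the exact $L^2$ recursions already established: Lemma \ref{lem:contra1} for SRM, Lemma \ref{lem:contra2} for GAM, and \eqref{eq:SEMt} for SEM. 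For SRM and GAM, using $\langle\eta_t\rangle\equiv\tfrac1n$, this yields $\E[\|\eta_t\|_2^2]\le(1-\lambda)^t+O(1/n)$ with $\lambda\in\{\lambda_{\rm SRM},\lambda_{\rm GAM}\}$, and since $\lambda\ge\tfrac2n(1-2\E[X^2])$ one has $(1-\lambda)^t\le\big(1-\tfrac2n(1-2\E[X^2])\big)^t$, so the bracketed difference is $\le O(1/n)$. For SEM, which does not conserve energy, I would decompose $\|\xi_t\|_2^2=\|\xi_t-\langle\xi_t\rangle\|_2^2+n\langle\xi_t\rangle^2$, bound the first summand by $e^{-\lambda_{\rm SEM}t}$ via Lemma \ref{lemma:AL_HMP}, and control $n\E[\langle\xi_t\rangle^2]$ through the duality \eqref{eq:dual}, which identifies $\langle\xi_t\rangle=\scalar{\bar u}{\xi_t}$ in law with $\eta_t(x_0)$ for the SRM started from $\bar u$: by exchangeability of that chain, $\E[\langle\xi_t\rangle^2]\le\tfrac1n\E[\|\eta_t\|_2^2]=O(1/n^2)$. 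Since $\lambda_{\rm SEM}$ and $\tfrac2n(1-2\E[X^2])$ differ only at order $n^{-2}$, a short expansion of the two exponentials then shows the bracketed difference is $O(\log n/n)$ here as well. In every case $\E[\|\eta^-_t\|_2^2]=O(e^{\psi}/n)$, and the first display yields $\E[\|\widehat\eta_t\|_1]-\E[\|\eta^+_t\|_1]=O(e^{-2\psi})\to0$.

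The step I expect to require the most care is the SEM case: the clean cancellation available for SRM and GAM is replaced by the two-step estimate on $\|\xi_t\|_2^2$ plus the duality bound on $n\langle\xi_t\rangle^2$, and one must verify that the slightly larger rate $\lambda_{\rm SEM}$ does not spoil the comparison with $\big(1-\tfrac2n(1-2\E[X^2])\big)^t$. The conceptual point underlying the whole argument is that one must not bound $\|\eta^-_t\|_2^2$ directly by $\|\eta_t\|_2^2$ — which is far too large, being dominated by the surviving large piles — but must isolate the diagonal, where the threshold $\theta$ can be exploited, and use that those large piles contribute only there.
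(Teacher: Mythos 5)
Your argument is correct and follows the paper's first two steps exactly: the Markov-inequality reduction to $\E[\|\eta^-_t\|_2^2]=O(e^\psi/n)$, and the diagonal/off-diagonal split of $\|\eta^-_t\|_2^2$ with the same $e^\psi/n$ bound on the diagonal. Where you diverge is in the off-diagonal piece $\sum_x\sum_{\zeta\neq\zeta'\in A_t(x)}|\zeta|\,|\zeta'|$: the paper sets up and iterates a one-step recursion for its expectation directly (their $\sum_x \E[E_t^{\rm nd}(x)]$), showing it is $O(1/n)$ uniformly in $t$, whereas you express the same quantity as $\E[\|\eta_t\|_2^2]-\E\big[\sum_\zeta|\zeta|^2\big]$ and evaluate both terms in closed form, using respectively the exact $L^2$ recursions (Lemma~\ref{lem:contra1}, Lemma~\ref{lem:contra2}, and \eqref{eq:SEMt}) and the identity $\E\big[\sum_\zeta|\zeta|^2\big]=\big(1-\tfrac2n(1-2\E[X^2])\big)^t$ coming out of the size-biasing machinery behind Proposition~\ref{lemma:size-pile}. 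The cancellation then works because every contraction rate satisfies $\lambda\ge\tfrac2n(1-2\E[X^2])$. Your route reuses identities the paper already proves and avoids deriving a new recursion, at the modest cost of needing the exact constant in $\E[\sum_\zeta|\zeta|^2]$; the paper's direct recursion is self-contained and makes the uniformity in $t$ manifest. One simplification you could make: no Taylor expansion of exponentials is needed in the SEM case either, since $\lambda_{\rm SEM}-\tfrac2n(1-2\E[X^2])=\tfrac{4\E[X^2]}{n(n-1)}\ge0$ makes the exponential difference nonpositive, so the bracketed difference is in fact $O(1/n)$ there too, exactly as for SRM and GAM.
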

	
Before giving the proof of the lemma, we observe that it concludes the proof of the upper bound for both  the SRM and the GAM. Indeed,  the above argument may be repeated without modifications for the GAM, with the matrices $R_i$ replaced by $Q_i$ as in \eqref{eq:repres}, and with $\l=\l_{\rm GAM}$  instead of $\l_{\rm SRM}$ in \eqref{eq:l2bo}. To complete the proof for the SEM there are only a few minor adjustments to be made. Indeed, the estimate \eqref{eq:l2bo} is not available here, and we argue instead using Lemma \ref{lemma:AL_HMP}. 
	Namely, by using the coupling \eqref{eq:GC}, and the fact that $\eta_{t,\infty}^*$ is a flat configuration, one finds
	\begin{align}
	\E\left[\left\|\eta_{t,r}^*-\eta_{t,\infty}^*\right\|_2^2\right]	
		& = \E[\|\eta_{t,r}^*-\langle \eta_{t,r}^*\rangle+\langle \eta_{t,r}^*\rangle-\eta_{t,\infty}^*\|_2^2] \\
		&= \E[\|\eta_{t,r}^*-\langle \eta_{t,r}^*\rangle\|_2^2] + n\E[(\langle \eta_{t,r}^*\rangle-\langle \eta_{t,\infty}^*\rangle)^2]\\
		&\le e^{-\l_{\rm SEM}r}\E[\|\eta_t^*-\langle \eta_t^*\rangle\|_2^2]  + n\E[\langle \eta_{t,\infty}^*\rangle^2-\langle \eta_{t,r}^*\rangle^2]\,,
	\end{align}
	where we have used Lemma \ref{lemma:AL_HMP}. To estimate the last term above, recall \eqref{eq:sipo} and note that
	for any $s>0$ we may estimate
	\[
	\E[\langle \eta_{t,r+s}^*\rangle^2-\langle \eta_{t,r+s-1}^*\rangle^2]\le \frac{a_n}{n^2}\,(1-\l_{\rm SEM})^{s}\E[\|\eta_{t,r}^*-\langle \eta_{t,r}^*\rangle\|_2^2]\,,
	\]
	where $a_n=\frac{2(4\E[X^2]-1)}{n-1}$.
	Summing over $s$, we obtain
	\[n\E[\langle \eta_{t,\infty}^*\rangle^2-\langle \eta_{t,r}^*\rangle^2]\le \frac{a_n}{n\l_{\rm SEM}}\E[\|\eta_{t,r}^*-\langle \eta_{t,r}^*\rangle\|_2^2]\le  \frac{c_1}n\,e^{-\l_{\rm SEM}r}\E[\|\eta_t^*-\langle \eta_t^*\rangle\|_2^2]\,,
	\]
	where $c_1>0$ is a constant independent of $n$, since $\frac{a_n}{n\l_{\rm SEM}}=O(1/n)$, and we have used again Lemma \ref{lemma:AL_HMP}. From \eqref{eq:tous} we conclude that
	\begin{equation}
		\begin{split}
	\E\left[\left\|\eta_{t,r}^*-\eta_{t,\infty}^*\right\|_2^2\right]	
	&\le \tonde{1+\frac{c_1}{n}}e^{-\lambda_{\rm SEM}r}\E\big[\|\eta_t^*-\langle \eta_t^*\rangle\|_2^2\big]\\
		&\le  \tonde{1+\frac{c_1}{n}}e^{-\lambda_{\rm SEM}r}\E\big[\|\eta_t^*\|_2^2\big]\le  \tonde{1+\frac{c_1}{n}}\,n^{-1}e^{-\lambda_{\rm SEM}r+6\psi}
		\,.
		\end{split}
	\end{equation}
	Hence, by taking  $r=c\g n\sqrt {\log n}$ with $c$ a constant independent of $n$ and $\g$, such that $r\lambda_{\rm SEM}\ge 8\psi$, we deduce that 
\begin{equation}\label{eq:l2bo2}
		\E\left[\left\|\eta_{t,r}^*-\eta_{t,\infty}^*\right\|_1\right]
		\le 
		\sqrt n \,\E\left[\left\|\eta_{t,r}^*-\eta_{t,\infty}^*\right\|_2^2\right]^{\frac12}
		=O(e^{-\psi})\,.
	\end{equation}
	This concludes the argument for SEM as well. It remains to prove Lemma \ref{lemma:eta*B}.

\begin{proof}[Proof of Lemma \ref{lemma:eta*B}]	Recall the definitions \eqref{eq:def-eta+g} and \eqref{eq:def-eta-star}, and define
	\begin{equation}
		B_t=\{x:\, n\eta_{t}^-(x)\ge e^{3\psi} \}\,.
	\end{equation}
	In this way \eqref{eq:topro} reduces to 
		\begin{equation}\label{eq:topros0}
			\lim_{n\to\infty}\E\left[\eta_{t}^-(B_t)\right]=0\,,
		\end{equation}
		where we use the notation 
		\begin{equation}\label{eq:zeta-B}
				\eta_t^-(B_t)=\sum_{x\in B_t}\eta^-_t(x) = \sum_{x\in B_t}\sum_{\zeta\in A_t(x)}|\zeta|\car_{n|\zeta|<e^\psi}
				\,.
				\end{equation}
We will actually prove that \eqref{eq:topros0} holds uniformly in $t\ge 0$, i.e.\
\begin{equation}\label{eq:topros}
			\lim_{n\to\infty}\sup_{t\ge 0}\E\left[\eta_{t}^-(B_t)\right]=0\,,
		\end{equation}
Notice that
		\begin{align}
				\eta_t^-(B_t)&=\sum_{x}\car_{n\eta_t^-(x)\ge e^{3\psi}}\sum_{\zeta\in A_t(x)}|\zeta|\car_{n|\zeta|< e^{\psi}}\\
				&\le ne^{-3\psi}\sum_{x}\eta_t^-(x)\sum_{\zeta\in A_t(x)}|\zeta|\car_{n|\zeta|< e^{\psi}}
				= ne^{-3\psi}\sum_{x}\eta_t^-(x)^2\,.
		\label{eq:zeta-B2}\end{align}

		We now estimate  the expectation of the sum in the right hand side of \eqref{eq:zeta-B2}. One has 
		\begin{align}\label{eq:Ed-End}
	\eta_t^-(x)^2=\sum_{\zeta,\zeta'\in A_t(x)}|\zeta|\,|\zeta'|\car_{n|\zeta|<{e^\psi}}\car_{n|\zeta'|<{e^\psi}}\le E_t^{\rm d}(x)+E_t^{\rm nd}(x)\,,
			\end{align}
	where we define 
		\begin{align}
	E_t^{\rm d}(x)=\sum_{\zeta\in A_t(x)}|\zeta|^2\car_{n|\zeta|<{e^\psi}}
	\,,\qquad 
	E_t^{\rm nd}(x) = \sum_{\zeta,\zeta'\in A_t(x)}|\zeta|\,|\zeta'|\car_{\zeta\neq \zeta'}\,,
			\end{align}
		The expectation of the sum of the diagonal terms $E_t^{\rm d}(x)$ can be bounded as
		\begin{equation}\label{eq:zeta-B3}
			\sum_{x}\E[E_t^{\rm d}(x)]\le \frac{e^\psi}{n}\sum_{x}\E\left[ \sum_{\zeta\in A_t(x)}|\zeta|\car_{|\zeta|<\frac{e^\psi}{n}}\right]=\frac{e^\psi}{n}\E[\|\eta_t^-\|_1]\le \frac{e^\psi}{n}\,.
		\end{equation}
			For the non-diagonal terms $E_t^{\rm nd}(x)$, we argue as follows.  Let $(u,v)$ denote the random pair of particles to be updated at time $t$ and let $X$ denote the associated redistribution variable. If $x\notin (u,v)$ then nothing happens at $x$ and therefore $A_t(x) = A_{t-1}(x)$ in this case. It follows that
			 \begin{equation}\label{eq:xnot}
			\E\left[\car_{x\notin (u,v)}E_t^{\rm nd}(x)\right] = \left(1-\frac2n\right)\E\left[E^{\rm nd}_{t-1}(x)\right]\,.
					\end{equation}
					If $(u,v) = (x,y)$, then any pile $\zeta\in A_t(x)$ has size $X|\zeta_0|$ where $\zeta_0$ is a uniquely determined pile, the parent, that was either in $A_{t-1}(x)$ or $A_{t-1}(y)$. This is true for the SRM. 	We will stick with the case of the SRM for the sake of simplicity, and will comment on the minor modifications needed in the other two models later on. We note that in any case $\zeta\neq\zeta'$ implies $\zeta_0\neq\zeta_0'$. Therefore, for a fixed pair $(x,y)$,
			 \begin{align}\label{eq:xin}
			\E\left[\car_{(x,y)= (u,v)}E_t^{\rm nd}(x)\right]& =\frac{\E[X^2]}{n(n-1)}\E\left[\sum_{\zeta,\zeta'\in A_{t-1}(x)\cup A_{t-1}(y)}|\zeta|\,|\zeta'|\car_{\zeta\neq \zeta'}
			\right]\,. 
					\end{align}
					The last expectation above can be rewritten as 
					 \begin{align}\label{eq:xin2}
					 \E\left[\sum_{\zeta,\zeta'\in A_{t-1}(x)\cup A_{t-1}(y)}|\zeta|\,|\zeta'|\car_{\zeta\neq \zeta'}
			\right]= \E[E_{t-1}^{\rm nd}(x)] + \E[E_{t-1}^{\rm nd}(y)]+2\E[\eta_{t-1}(x)\eta_{t-1}(y)]\,.
				\end{align}	
			The case $(u,v)=(y,x)$ is analogous, with $X$ replaced by $1-X$. Since $\E[X^2]=\E[(1-X)^2]$, the expressions \eqref{eq:xnot} and \eqref{eq:xin} imply
		\begin{align}
			\sum_{x}\E[E_t^{\rm nd}(x)]&\le  \left(1-\frac2n\right)\sum_{x}\E\left[E^{\rm nd}_{t-1}(x)\right] +\\& \quad + \frac{2\E[X^2]}{n(n-1)}{\sum_{x,y\,:\,x\neq y}}\left(\E[E_{t-1}^{\rm nd}(x)] + \E[E_{t-1}^{\rm nd}(y)]+2\E[\eta_{t-1}(x)\eta_{t-1}(y)]\right)\\ & 
			\leq \left(1-\frac2n + \frac{4\E[X^2]}n\right)\sum_{x}\E\left[E^{\rm nd}_{t-1}(x)\right] + \frac{4\E[X^2]}{n(n-1)}\,\E\left[\|\eta_{t-1}\|_1^2\right]\,.
					\end{align}
		The above estimate has been derived for SRM. Let us remark that the same expression holds for SEM and GAM with the only modification that 	the last term in the right hand side has $\E[X^2]$ replaced by $\E[X(1-X)]$. This follows from symmetry of $X$ and the fact that 
		in GAM and SEM one has $\zeta=X|\zeta_0|$ if $\zeta_0\in A_{t-1}(x)$ and $\zeta=(1-X)|\zeta_0|$ if $\zeta_0\in A_{t-1}(y)$.
Moreover, for SRM and GAM		one has $\|\eta_{t}\|_1=1$ for all $t$. For SEM, using \eqref{eq:dual} it follows that $\E\left[\|\eta_t\|_1^2\right]=O(1)$. In conclusion, we see that, for an absolute constant $C>0$, for all three models one has
		\begin{equation}
			\sum_{x}\E[E_t^{\rm nd}(x)]
				\le\frac{C}{n^2}+\left(1-\frac{2-4\E[X^2]}n\right)\sum_{x}\E[E_{t-1}^{\rm nd}(x)]\,.
		\end{equation}
		Iterating,	and using $\sum_xE_{0}^{\rm nd}	(x)=0$, 
		\begin{equation}\label{eq:zeta-B4}
			\sum_{x}\E[E_t^{\rm nd}]\le \frac{C}{n^2}\sum_{s=0}^\infty\left(1-\frac{2-4\E[X^2]}n\right)^s= \frac{C}{n(2-4\E[X^2])}=O\bigg(\frac1n\bigg)\,, 
		\end{equation}
		where $2-4\E[X^2]>0$ holds  by the non-degeneracy assumption. Remark that  the obtained estimate is uniform in $t\ge0$. By \eqref{eq:zeta-B2}, \eqref{eq:Ed-End}, \eqref{eq:zeta-B3} and \eqref{eq:zeta-B4},  we get \eqref{eq:topros}, as desired.
	\end{proof}

	\begin{acknowledgement}
	 MQ and FS are members of GNAMPA, INdAM, and acknowledge financial support through the GNAMPA project \textquotedblleft Redistribution models on networks\textquotedblright.
	\end{acknowledgement}
	
%	\bibliographystyle{alpha}
%	\bibliography{bookshelf}

\begin{thebibliography}{CCLR{\etalchar{+}}10}
	
	\bibitem[AF02]{aldous-fill-2014}
	David Aldous and James~Allen Fill.
	\newblock {R}eversible {M}arkov {C}hains and {R}andom {W}alks on {G}raphs,
	2002.
	\newblock Unfinished monograph, recompiled 2014, available at
	\url{https://www.stat.berkeley.edu/users/aldous/RWG/book.pdf}.
	
	\bibitem[AL12]{aldous_lecture_2012}
	David Aldous and Daniel Lanoue.
	\newblock A lecture on the averaging process.
	\newblock {\em Probab. Surv.}, 9:90--102, 2012.
	
	\bibitem[BB21]{banerjee2020rates}
	Sayan Banerjee and Krzysztof Burdzy.
	\newblock Rates of convergence to equilibrium for potlatch and smoothing
	processes.
	\newblock {\em Ann. Probab.}, 49(3):1129--1163, 2021.
	
	\bibitem[BCS18]{bordenave_random_2018}
	Charles Bordenave, Pietro Caputo, and Justin Salez.
	\newblock Random walk on sparse random digraphs.
	\newblock {\em Probab. Theory Related Fields}, 170(3-4):933--960, 2018.
	
	\bibitem[BCS19]{bordenave_cutoff2019}
	Charles Bordenave, Pietro Caputo, and Justin Salez.
	\newblock Cutoff at the ``entropic time'' for sparse {M}arkov chains.
	\newblock {\em Probab. Theory Related Fields}, 173(1-2):261--292, 2019.
	
	\bibitem[Cap08]{caputo_kac2008}
	Pietro Caputo.
	\newblock On the spectral gap of the {K}ac walk and other binary collision
	processes.
	\newblock {\em ALEA Lat. Am. J. Probab. Math. Stat.}, 4:205--222, 2008.
	
	\bibitem[CCL03]{carlen_carvalho_loss_determination_2003}
	E.~A. Carlen, M.~C. Carvalho, and M.~Loss.
	\newblock Determination of the spectral gap for {K}ac's master equation and
	related stochastic evolution.
	\newblock {\em Acta Math.}, 191(1):1--54, 2003.
	
	\bibitem[CCLR{\etalchar{+}}10]{carlen_entropy_2010}
	Eric~A. Carlen, Maria~C. Carvalho, Jonathan Le~Roux, Michael Loss, and C\'edric
	Villani.
	\newblock Entropy and chaos in the {K}ac model.
	\newblock {\em Kinet. Relat. Models}, 3(1):85--122, 2010.
	
	\bibitem[CDSZ22]{chatterjee2020phase}
	Sourav Chatterjee, Persi Diaconis, Allan Sly, and Lingfu Zhang.
	\newblock A phase transition for repeated averages.
	\newblock {\em Ann. Probab.}, 50(1):1--17, 2022.
	
	\bibitem[CFL09]{castellano_statistical_2009}
	Claudio Castellano, Santo Fortunato, and Vittorio Loreto.
	\newblock Statistical physics of social dynamics.
	\newblock {\em Rev. Mod. Phys.}, 81:591--646, 2009.
	
	\bibitem[CLL20]{caputo_mixing_2019}
	Pietro Caputo, Cyril Labb\'{e}, and Hubert Lacoin.
	\newblock Mixing time of the adjacent walk on the simplex.
	\newblock {\em Ann. Probab.}, 48(5):2449--2493, 2020.
	
	\bibitem[CQS23]{caputo_quattropani_sau_cutoff_2023}
	Pietro Caputo, Matteo Quattropani, and Federico Sau.
	\newblock Cutoff for the averaging process on the hypercube and complete
	bipartite graphs.
	\newblock {\em Electron. J. Probab.}, 28:Paper No. 100, 31, 2023.
	
	\bibitem[CQS24]{caputo2024repeated}
	Pietro Caputo, Matteo Quattropani, and Federico Sau.
	\newblock Repeated {B}lock {A}verages: entropic time and mixing profiles.
	\newblock {\em arXiv:2407.16656}, 2024.
	
	\bibitem[DMFG24]{de_masi_ferrari_gabrielli_hidden_2023}
	Anna De~Masi, Pablo~A. Ferrari, and Davide Gabrielli.
	\newblock Hidden temperature in the {KMP} model.
	\newblock {\em J. Stat. Phys.}, 191(11):Paper No. 150, 28, 2024.
	
	\bibitem[DSC00]{diaconis_saloff-coste_bounds_2000}
	Persi Diaconis and Laurent Saloff-Coste.
	\newblock Bounds for {K}ac's master equation.
	\newblock {\em Comm. Math. Phys.}, 209(3):729--755, 2000.
	
	\bibitem[GKS12]{grigo_mixing_2012}
	A.~Grigo, K.~Khanin, and D.~Sz\'{a}sz.
	\newblock Mixing rates of particle systems with energy exchange.
	\newblock {\em Nonlinearity}, 25(8):2349--2376, 2012.
	
	\bibitem[GRT25]{giardina_redig_tol_intertwining_2024}
	Cristian Giardin\`a, Frank Redig, and Berend~van Tol.
	\newblock Intertwining and {P}ropagation of {M}ixtures for {G}eneralized {KMP}
	{M}odels and {H}armonic {M}odels.
	\newblock {\em J. Stat. Phys.}, 192(2):Paper No. 21, 2025.
	
	\bibitem[Jia17]{jiang_kac_2017}
	Yunjiang Jiang.
	\newblock Kac's random walk on the special orthogonal group mixes in polynomial
	time.
	\newblock {\em Proc. Amer. Math. Soc.}, 145(10):4533--4541, 2017.
	
	\bibitem[Kac56]{kac_foundations_1956}
	M.~Kac.
	\newblock Foundations of kinetic theory.
	\newblock In {\em Proceedings of the {T}hird {B}erkeley {S}ymposium on
		{M}athematical {S}tatistics and {P}robability, 1954--1955, vol. {III}}, pages
	171--197. Univ. California Press, Berkeley-Los Angeles, Calif., 1956.
	
	\bibitem[KMP82]{kipnis_heat_1982}
	C.~Kipnis, C.~Marchioro, and E.~Presutti.
	\newblock Heat flow in an exactly solvable model.
	\newblock {\em J. Statist. Phys.}, 27(1):65--74, 1982.
	
	\bibitem[KQS25]{kim2025spectral}
	Seonwoo Kim, Matteo Quattropani, and Federico Sau.
	\newblock Spectral gap of the {KMP} and other stochastic exchange models on
	arbitrary graphs.
	\newblock {\em arXiv:2505.02400}, 2025.
	
	\bibitem[Lac16]{lacoin2016mixing}
	Hubert Lacoin.
	\newblock Mixing time and cutoff for the adjacent transposition shuffle and the
	simple exclusion.
	\newblock {\em Ann. Probab.}, 44(2):1426--1487, 2016.
	
	\bibitem[Lan24]{lanchier_stochastic_2024}
	Nicolas Lanchier.
	\newblock {\em Stochastic interacting systems in life and social sciences},
	volume~5 of {\em De Gruyter Series in Probability and Stochastics}.
	\newblock De Gruyter, Berlin, 2024.
	
	\bibitem[LP25]{labbe_petit_hydrodynamic_2025}
	Cyril Labb{\'e} and Engu{\'e}rand Petit.
	\newblock Hydrodynamic limit and cutoff for the biased adjacent walk on the
	simplex.
	\newblock {\em Ann. Inst. Henri Poincar\'{e} Probab. Stat.}, 61(2):769--802,
	2025.
	
	\bibitem[LS10]{lubetzky2010cutoff}
	Eyal Lubetzky and Allan Sly.
	\newblock Cutoff phenomena for random walks on random regular graphs.
	\newblock {\em Duke Math. J.}, 153(3):475--510, 2010.
	
	\bibitem[MM13]{mischler_mouhot_kac_2013}
	St\'ephane Mischler and Cl\'ement Mouhot.
	\newblock Kac's program in kinetic theory.
	\newblock {\em Invent. Math.}, 193(1):1--147, 2013.
	
	\bibitem[MSW24]{movassagh_repeated2022}
	Ramis Movassagh, Mario Szegedy, and Guanyang Wang.
	\newblock Repeated averages on graphs.
	\newblock {\em Ann. Appl. Probab.}, 34(4):3781--3819, 2024.
	
	\bibitem[Oli09]{oliveira_convergence_2009}
	Roberto~Imbuzeiro Oliveira.
	\newblock On the convergence to equilibrium of {K}ac's random walk on matrices.
	\newblock {\em Ann. Appl. Probab.}, 19(3):1200--1231, 2009.
	
	\bibitem[PS17]{pillai_smith_kac_2017}
	Natesh~S. Pillai and Aaron Smith.
	\newblock Kac's walk on {$n$}-sphere mixes in {$n\log n$} steps.
	\newblock {\em Ann. Appl. Probab.}, 27(1):631--650, 2017.
	
	\bibitem[PS18]{pillai_smith_mixing_2018}
	Natesh~S. Pillai and Aaron Smith.
	\newblock On the mixing time of {K}ac's walk and other high-dimensional {G}ibbs
	samplers with constraints.
	\newblock {\em Ann. Probab.}, 46(4):2345--2399, 2018.
	
	\bibitem[QS23]{quattropani2021mixing}
	Matteo Quattropani and Federico Sau.
	\newblock Mixing of the averaging process and its discrete dual on
	finite-dimensional geometries.
	\newblock {\em Ann. Appl. Probab.}, 33(2):936--971, 2023.
	
	\bibitem[RW05]{randall_winkler_mixing_2005}
	Dana Randall and Peter Winkler.
	\newblock Mixing points on a circle.
	\newblock In {\em Approximation, randomization and combinatorial optimization},
	volume 3624 of {\em Lecture Notes in Comput. Sci.}, pages 426--435. Springer,
	Berlin, 2005.
	
	\bibitem[Sal24]{salez_cutoff_2024}
	Justin Salez.
	\newblock Cutoff for non-negatively curved {M}arkov chains.
	\newblock {\em J. Eur. Math. Soc. (JEMS)}, 26(11):4375--4392, 2024.
	
	\bibitem[Sal25]{salez2025cutoff}
	Justin Salez.
	\newblock Cutoff for non-negatively curved diffusions.
	\newblock {\em arXiv:2501.01304}, 2025.
	
	\bibitem[Smi14]{smith_gibbs_2014}
	Aaron Smith.
	\newblock A {G}ibbs sampler on the {$n$}-simplex.
	\newblock {\em Ann. Appl. Probab.}, 24(1):114--130, 2014.
	
	\bibitem[Vil03]{villani_cercignani_2003}
	C\'edric Villani.
	\newblock Cercignani's conjecture is sometimes true and always almost true.
	\newblock {\em Comm. Math. Phys.}, 234(3):455--490, 2003.
	
\end{thebibliography}

\newcommand{\etalchar}[1]{$^{#1}$}

\end{document}